\theoremstyle{plain}
\newtheorem{theorem}{Theorem}[section]
\newtheorem{lemma}[theorem]{Lemma}
\newtheorem{proposition}[theorem]{Proposition}
\newtheorem{corollary}[theorem]{Corollary}
\theoremstyle{definition}
\newtheorem{remark}[theorem]{Remark}
\newcommand{\MM}{\mathcal M}
\newcommand{\BM}{\overline{\mathcal M}}
\newcommand{\bx}{\overline{x}}
\newcommand{\bM}{\overline{M}}
\newcommand{\TMM}{\widetilde{\mathcal M}}
\newcommand{\CC}{\mathcal C}
\newcommand{\OO}{\mathcal O}
\newcommand{\JJ}{\mathcal J}
\newcommand{\Eff}{\operatorname{Eff}}
\newcommand{\BEff}{\overline{\operatorname{Eff}}}
\newcommand{\age}{\operatorname{age}}
\newcommand{\Aut}{\operatorname{Aut}}
\newcommand{\sing}{\operatorname{sing}}
\newcommand{\tor}{\operatorname{tor}}
\newcommand{\irr}{\operatorname{irr}}
\newcommand{\Jac}{\operatorname{Jac}}
\newcommand{\reg}{\operatorname{reg}}
\newcommand{\bbC}{\mathbb C}
\newcommand{\bbP}{\mathbb P}
\newcommand{\bbZ}{\mathbb Z}
\title{Extremal effective divisors on $\BM_{1,n}$}
\author{Dawei Chen}
\address{Department of Mathematics, Boston College, Chestnut Hill, MA 02467}
\email{dawei.chen@bc.edu}
\author{Izzet Coskun}
\address{University of Illinois at
  Chicago, Department of Mathematics, Statistics, and Computer Science, Chicago, IL 60607}
\email{coskun@math.uic.edu}
\thanks{During the preparation of this article the first author was partially supported by NSF grant DMS-1200329 and the second author was
 partially supported by NSF CAREER grant DMS-0950951535 and an Alfred P. Sloan Foundation Fellowship.}
\begin{document}
\bibliographystyle{halpha}

\begin{abstract}
For every $n\geq 3$, we exhibit infinitely many extremal effective divisors on the moduli space of genus one curves with $n$ marked points. 
\end{abstract}

\maketitle

\setcounter{tocdepth}{1}
\tableofcontents

\section{Introduction}

Let $\BM_{g,n}$ denote the moduli space of stable genus $g$ curves with $n$ ordered marked points. Understanding the cone of pseudo-effective divisors $\BEff(\BM_{g,n})$ is a  central problem in the birational geometry of $\BM_{g,n}$. Since the 1980s, motivated by the problem of determining the Kodaira dimension of $\BM_{g,n}$, many authors have constructed families of effective divisors on $\BM_{g,n}$. For example, Harris, Mumford and Eisenbud \cite{HarrisMumfordKodaira, HarrisKodaira, EisenbudHarrisKodaira}, using Brill-Noether and Gieseker-Petri divisors showed that $\BM_g$ is of general type for $g > 23$.  Using Kozsul divisors, Farkas \cite{FarkasKoszul} extended this result to $g \geq 22$.  Logan \cite{LoganKodaira}, using generalized 
Brill-Noether divisors, obtained similar results for the Kodaira dimension of $\BM_{g,n}$ when $n > 0$. 

Although we know many examples of effective divisors on $\BM_{g,n}$, the structure of the pseudo-effective cone $\BEff(\BM_{g,n})$ remains mysterious in general. Recently, inspired by the work of Keel and Vermeire \cite{Vermeire} on $\BM_{0,6}$, Castravet and Tevelev \cite{CastravetTevelev} constructed a sequence of non-boundary extremal effective divisors on $\BM_{0,n}$ for $n\geq 6$. For higher genera, Farkas and Verra \cite{FarkasVerraJacobian, FarkasVerraTheta} showed that certain variations of pointed Brill-Noether divisors are extremal on $\BM_{g,n}$ for $g-2\leq n \leq g$. However, for fixed $g$ and $n$, these constructions yield only finitely many extremal divisors. This raises the question whether there exist $g$ and $n$ such that  $\BEff(\BM_{g,n})$ is not finitely generated. 

Motivated by this question, in this paper we study the moduli space of genus one curves with $n$ marked points. Let ${\bf a} = (a_1, \ldots, a_n)$ be a collection of $n$ integers satisfying $\sum_{i=1}^n a_i = 0$, not all equal to zero. Define $D_{\bf a}$ in $\BM_{1,n}$ as the closure of the divisorial locus parameterizing smooth genus one curves with $n$ marked points $(E; p_1, \ldots, p_n)$ such that $\sum_{i=1}^n a_i p_i = 0$ in the Jacobian of $E$. 

Our main theorem is as follows. 

\begin{theorem}
\label{thm:main}
Suppose that $n\geq 3$ and $\gcd (a_1, \ldots, a_n) = 1$. Then $D_{\bf a}$ is an extremal and rigid effective divisor on $\BM_{1,n}$. Moreover, these 
$D_{\bf a}$'s yield infinitely many extremal rays for $\BEff(\BM_{1,n})$. Consequently, $\BEff(\BM_{1,n})$ is not finite polyhedral and $\BM_{1,n}$ is not a Mori dream space. 
\end{theorem}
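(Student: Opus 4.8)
The plan is to treat the two halves of the statement separately: first that each $D_{\bf a}$ is rigid and spans an extremal ray of $\BEff(\BM_{1,n})$, and then that these rays are pairwise distinct for infinitely many ${\bf a}$. For the first half the natural tool is the covering-curve test: if $D$ is an irreducible effective divisor and $\{C_t\}$ is a family of irreducible curves sweeping out a dense subset of $D$ with $C_t\cdot D<0$, then $D$ is rigid and $[D]$ spans an extremal ray of $\BEff$. So the goal is to exhibit such a negative covering family inside each $D_{\bf a}$. Before that I would record irreducibility: using the fibration $\BM_{1,n}\to\BM_{1,1}$ and trivializing a general fiber by taking $p_1$ as origin, the condition $\sum a_i p_i=0$ becomes the kernel of the homomorphism $\phi\colon E^{n-1}\to E$, $(p_2,\dots,p_n)\mapsto\sum_{i\ge 2}a_i p_i$. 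This kernel is connected exactly when $\gcd(a_2,\dots,a_n)=1$, and since $a_1=-\sum_{i\ge 2}a_i$ this equals $\gcd(a_1,\dots,a_n)=1$; hence the general fiber of $D_{\bf a}$ is the irreducible abelian subvariety $\ker\phi$ and $D_{\bf a}$ is irreducible.

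Next I would compute $[D_{\bf a}]$ in $\Pic(\BM_{1,n})_{\bbQ}$ by intersecting against test curves. For the curve $F_j$ obtained by fixing a general $(E;p_i)_{i\ne j}$ and letting $p_j$ sweep $E$, a blow-up computation on $E\times E$ gives $F_j\cdot\psi_j=n-1$, $F_j\cdot\psi_i=1$ for $i\ne j$, $F_j\cdot\delta_{0,\{i,j\}}=1$, and $F_j\cdot\lambda=F_j\cdot\delta_{\irr}=0$; on the other hand membership in $D_{\bf a}$ reduces to $a_j p_j=\text{const}$, so $F_j\cdot D_{\bf a}=a_j^2$. These together with analogous multisection curves pin down the coefficients, which come out quadratic in the partial sums $\sum_{i\in S}a_i$. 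For the covering family I would let $C$ move two marked points $p_1,p_2$ along the one-dimensional subtorus $a_1p_1+a_2p_2=\text{const}$ while fixing $E$ and the remaining points so that $C\subset D_{\bf a}$; letting the fixed data vary, these curves sweep out $D_{\bf a}$. The same blow-up bookkeeping computes $C\cdot\psi_i$ and $C\cdot\delta_{0,S}$ from the collision counts $(a_1+a_2)^2$, $a_1^2$, $a_2^2$, and pairing with $[D_{\bf a}]$ yields the number $C\cdot D_{\bf a}$.

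The hard part will be verifying that this number is \emph{negative}. The subtlety is that in the interior of a general fiber $D_{\bf a}$ is a fiber of $\phi$, hence has trivial normal bundle there, so $C$ meets it with multiplicity zero in the open part; the entire intersection number — and in particular its sign — is a boundary phenomenon, arising from the discrepancy between the $\psi$-coefficients and the $\delta_{0,S}$-coefficients of $[D_{\bf a}]$ as $C$ accumulates on the boundary divisors. Getting this balance to come out strictly negative, while correctly accounting for the $\pm 1$ automorphism of the general fiber in the class computation, is the delicate step. Once $C\cdot D_{\bf a}<0$ is established, the covering-curve test immediately gives that $D_{\bf a}$ is rigid and extremal.

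Finally, the pairings $F_j\cdot D_{\bf a}=a_j^2$ show that $[D_{\bf a}]$ and $[D_{{\bf a}'}]$ are proportional only if $(a_1^2:\dots:a_n^2)=({a'_1}^2:\dots:{a'_n}^2)$. Already for $n=3$ the coprime vectors $(a,b,-a-b)$ realize infinitely many distinct ratios $(a^2:b^2:(a+b)^2)$, and for $n\ge 3$ one includes these by padding with zeros and adjusting one entry to restore coprimality; this produces infinitely many pairwise distinct classes, each spanning an extremal ray by the previous step. Hence $\BEff(\BM_{1,n})$ has infinitely many extremal rays and so is not finite polyhedral. Since the effective cone of any Mori dream space is rational polyhedral, it follows that $\BM_{1,n}$ is not a Mori dream space.
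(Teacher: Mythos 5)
Your overall framework---irreducibility of $D_{\bf a}$, the covering-curve criterion (the paper's Lemma~\ref{lem:negative}), test curves showing the classes span infinitely many distinct rays, and Hu--Keel for the Mori dream space statement---matches the paper. The genuine gap is exactly at the step you flag as ``the hard part'': for $n\geq 4$ your proposed covering curve does \emph{not} have negative intersection with $D_{\bf a}$; the number is exactly zero. Concretely, let $C$ move $(p_1,p_2)$ along $a_1p_1+a_2p_2=c$ with $E$ and general distinct $p_3,\dots,p_n$ fixed. Then $C\cdot\lambda=0$, $C\cdot\delta_{0;\{1,2\}}=(a_1+a_2)^2$, $C\cdot\delta_{0;\{1,j\}}=a_2^2$ and $C\cdot\delta_{0;\{2,j\}}=a_1^2$ for $j\geq3$, and $C$ meets no other boundary divisor: for general fixed points no triple collision occurs, and $C$ never reaches $\delta_{0;\{1,\dots,n\}}$. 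Pairing with the class in Proposition~\ref{prop:totalclass} and using $\sum_{j\geq3}a_j=-(a_1+a_2)$ gives
$$C\cdot D_{\bf a}=-a_1a_2(a_1+a_2)^2-a_2^2\sum_{j\geq3}a_1a_j-a_1^2\sum_{j\geq3}a_2a_j=-a_1a_2(a_1+a_2)^2+a_1a_2(a_1+a_2)^2=0.$$
This is not an accident: the only coefficient of $[D_{\bf a}]$ deviating from the ``expected'' quadratic value $-\sum_{\{i,j\}\subset S}a_ia_j$ is that of $\delta_{0;\{1,\dots,n\}}$, which carries an extra $-1$, so strict negativity can only be picked up by a curve passing through the stratum where \emph{all} marked points collide. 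A curve moving only two points over a general configuration of the others never gets there. (For $n=3$ your curve coincides with the paper's and does work, precisely because the triple collision then occurs, once.)

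The paper's mechanism for $n\geq4$ is different: for fixed $E$ and $\eta\in\Jac(E)$, the locus $D_{\bf a}(E,\eta)$ is disjoint from $D_{\bf a}$ when $\eta\neq0$, so the curve $C_{\bf a}(E,\eta)$ cut out on it by $n-3$ very ample divisors satisfies $C_{\bf a}(E,\eta)\cdot D_{\bf a}=0$; specializing to $\eta=0$, the curve breaks as $C_{\bf a}(E)+C_{0;\{1,\dots,n\}}(E)$, the second piece lies inside $\delta_{0;\{1,\dots,n\}}$ and meets $D_{\bf a}$ positively, and connectedness of the specialization forces $C_{\bf a}(E)\cdot D_{\bf a}<0$. (When only three entries of ${\bf a}$ are nonzero, one instead pulls back the $n=3$ case along the forgetful map to $\BM_{1,3}$ and uses the projection formula.) You would need to replace your covering family by something of this kind---or otherwise arrange for your curves to meet $\delta_{0;\{1,\dots,n\}}$---before the covering-curve test applies. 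The remaining parts of your outline (irreducibility via the connected kernel of $E^{n-1}\to E$, the computation $F_j\cdot D_{\bf a}=a_j^2$ distinguishing the rays, Hu--Keel) are sound and consistent with the paper.
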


The assumption $\gcd (a_1, \ldots, a_n) = 1$ is necessary to ensure that $D_{\bf a}$ is irreducible, see Section~\ref{sec:components}. Our strategy for proving the extremality of $D_{\bf a}$ is to exhibit irreducible curves $C$ Zariski dense in $D_{\bf a}$ such that $C\cdot D_{\bf a} < 0$. 

By exhibiting nef line bundles that are not semi-ample, Keel \cite[Corollary 3.1]{Keel} had already observed that $\BM_{g,n}$ cannot be a Mori dream space if $g \geq 3$ and $n \geq 1$. 

The divisor class of $D_{\bf a}$ was first calculated by Hain \cite[Theorem 12.1]{haindivisor} using normal functions. The restriction of this class to the locus of curves with rational tails was worked out by Cavalieri, Marcus and Wise~\cite{CavalieriMarcusWise} using Gromov-Witten theory. Two other proofs were recently obtained by Grushevsky and Zakharov \cite{GrushevskyZakharov} and by M\"uller \cite{Mueller}. We remark that all of them considered more general cycle classes in $\MM_{g,n}$ for $g\geq 1$, by pulling back the zero section of the universal Jacobian or the Theta divisor of the universal Picard variety of degree $g-1$.  

The symmetric group $\mathfrak{S}_n$ acts on $\BM_{1,n}$ by permuting the labeling of the marked points. Denote the quotient by $\TMM_{1,n} = \BM_{1,n} / \mathfrak{S}_n$. In contrast to Theorem \ref{thm:main}, in the last section, we show that $\BEff(\TMM_{1,n})$ is finitely generated. In fact, following an argument of Keel and M$^{\rm{c}}$Kernan \cite{KeelMcKernanContractible}, we prove that the boundary divisors generate  $\BEff(\TMM_{1,n})$. 

Note that for a subgroup $G\subset \mathfrak{S}_n$, if infinitely many irreducible divisors $D_{\bf a}$ in the above can be directly defined 
on $\BM_{1,n}/G$, then $\BEff(\BM_{1,n}/G)$ is not finitely generated. For instance, consider $n = 6$ and $G$ the subgroup of $\mathfrak{S}_6$ generated by three simple transpositions $(12)$, $(34)$ and $(56)$. Then $D_{(a,a,b,b,c,c)}$ is well-defined on $\BM_{1,6}/G$ for $a + b + c = 0$. Moreover, if $\gcd (a,b,c) = 1$, then $D_{(a,a,b,b,c,c)}$ is irreducible and extremal on $\BM_{1,6}/G$ as well. It would be interesting to classify all subgroups $G \subset \mathfrak{S}_n$ for which  
$\BEff(\BM_{1,n}/G)$ is not finitely generated. 

This paper is organized as follows. In Section~\ref{sec:prelim}, we review the divisor theory of $\BM_{1,n}$. In Section~\ref{sec:hain}, we discuss the geometry of $D_{\bf a}$, including its divisor class and irreducible components. In Section~\ref{sec:main}, we prove our main results and 
describe a conceptual understanding from the viewpoint of birational automorphisms of $\BM_{1,3}$. In Section~\ref{sec:unordered}, we study the moduli space $\TMM_{1,n}$ of genus one curves with $n$ unordered marked points and show that its effective cone is generated by boundary divisors. Finally, in the appendix, we analyze the singularities of 
$\BM_{1,n}$ and show that a canonical form defined on its smooth locus extends holomorphically to an arbitrary resolution. 

\medskip

\noindent{\bf Acknowledgements:} We would like to thank Gabi Farkas, Sam Grushevsky, Dick Hain, Joe Harris, Ian Morrison, Martin M\"{o}ller and Anand Patel for many valuable discussions about this paper. 

\section{Preliminaries on $\BM_{1,n}$}
\label{sec:prelim}

In this section, we recall basic facts concerning the geometry of $\BM_{1,n}$. We refer the reader to \cite{ArbarelloCornalba, BiniFontanariKodaira, SmythGenusOneII} for the facts quoted below. 

Let $\lambda$ be the first Chern class of the Hodge bundle on $\BM_{1,n}$. Let $\delta_{\irr}$ be the divisor class of the locus in $\BM_{1,n}$ that parameterizes curves with a non-separating node. 
The general point of  $\delta_{\irr}$ parameterizes a rational nodal curve with $n$ marked points. Let $S$ be a subset of 
$\{1,\ldots,n \}$ with cardinality $|S| \geq 2$ and let $S^c$ denote its complement. Let $\delta_{0;S}$ denote the divisor class of the locus in $\BM_{1,n}$ parameterizing curves with a node that separates the curve into a stable genus zero curve marked by $S$ and a stable genus one curve marked by $S^c$.  In addition, let $\psi_i$ be the first Chern class of the cotangent bundle associated to the $i$th marked point for $1\leq i \leq n$. Here we consider the divisor classes in the moduli stack instead of the coarse moduli scheme, see e.g.~\cite[Section 3.D]{HarrisMorrison} for more details.
 
The rational Picard group of $\BM_{1,n}$ is generated by $\lambda$ and $\delta_{0;S}$ for all $|S| \geq 2$. The divisor classes $\delta_{\irr}$ and $\psi_i$ can be expressed in terms of the generators as 
$$ \delta_{\irr} = 12 \lambda, $$
$$\psi_i = \lambda + \sum_{i\in S} \delta_{0;S}.$$ 
The canonical class of $\BM_{1,n}$ is 
$$ K_{\BM_{1,n}} = (n- 11)\lambda + \sum_{|S| \geq 2} (|S| - 2) \delta_{0;S}.$$ 

For $n \leq 10$, $\BM_{1,n}$ is rational \cite[Theorem 1.0.1]{BelorousskiThesis}. Moreover, the Kodaira dimension of $\BM_{1,11}$ is zero and the Kodaira dimension of $\BM_{1,n}$ for $n \geq 12$ is one \cite[Theorem 3]{BiniFontanariKodaira}.\footnote{In order to study the Kodiara dimension of a singular variety, one needs to ensure that a canonical form defined in its smooth locus extends holomorphically to a resolution. Farkas informed the authors that such a verification for $\BM_{1,n}$ seems not to be easily accessible in the literature. Although the Kodaira dimension of $\BM_{1,n}$ is irrelevant for our results, we will treat this issue in the appendix by a standard argument based on the Reid-Tai criterion.} 



\section{Geometry of $D_{\bf a}$}
\label{sec:hain}

Let ${\bf a} = (a_1, \ldots, a_n)$ be a sequence of integers, not all equal to zero, such that $\sum_{i=1}^n a_i = 0$. 
The divisor $D_{\bf a}$ in $\BM_{1,n}$ is defined as the closure of the locus 
parameterizing smooth genus one curves $E$ with $n$ distinct marked points $p_1, \ldots, p_n$ satisfying $\sum_{i=1}^n a_i p_i = 0$ in $\Jac(E)$. Equivalently, let $\JJ$ denote the universal Jacobian. 
We have a map $\MM_{1,n}\to \JJ$ induced by 
$$ (E; p_1, \ldots, p_n) \mapsto \OO_{E}\Big(\sum_{i=1}^n a_ip_i\Big). $$
Then $D_{\bf a}$ is the closure of the pullback of the zero section of $\JJ$. 

The divisor class of $D_{\bf a}$ was first calculated by Hain \cite[Theorem 12.1]{haindivisor}. 
We point out that the setting of \cite{haindivisor} is slightly different from ours. There the map $\MM_{1,n}\to \JJ$, denoted by $F_{\bf d}$, extends to $\BM_{1,n}$ as a morphism in codimension one. Hence the pullback of the zero section of $\JJ$, denoted by $F_{\bf d}^{*}\eta_1$, may contain boundary divisors. In particular, if the marked points $p_1, \ldots, p_n$ coincide on $E$, the condition $\sum_{i=1}^n a_i p_i = 0$ automatically holds by the assumption $\sum_{i=1}^n a_i = 0$. In other words,  $F_{\bf d}^{*}\eta_1$  contains the boundary divisor $\delta_{0; \{1,\ldots, n \}}$. In contrast, in our setting $D_{\bf a}$ does not contain any boundary divisors. This was already observed by Cautis \cite[Proposition 3.4.7]{CautisThesis} for the case $n=2$. In order to clarify this distinction, we will first carry out a direct calculation for the class of 
$D_{\bf a}$ and confirm that it matches with \cite{haindivisor} after adding $\delta_{0; \{1,\ldots, n \}}$. 

\subsection{Divisor class of $D_{\bf a}$}

Take a general one-dimensional family $\pi: \CC\to B$ of genus one curves with $n$ sections $\sigma_1, \ldots, \sigma_n$
such that every fiber 
contains at most one node and the total space of the family is smooth. Suppose there are $d_S$ fibers in which the sections labeled by $S$ intersect simultaneously and pairwise transversally. Let $d_{\irr}$ be the number of rational nodal fibers. Let $\omega$ be the first Chern class of the relative dualizing sheaf associated to $\pi$ and $\eta$ the locus of nodes in $\CC$. Then the following formulae are standard \cite{HarrisMorrison}:
$$ \pi_{*}\eta = d_{\irr} + \sum_S d_{S}, $$
$$ \omega^2 = - \sum_{S} d_{S}, $$
$$ \sigma_i \cdot \sigma_j = \sum_{\{i,j\}\subset S} d_S, $$
$$ \omega\cdot \sigma_i = - \sigma_i^2 = B\cdot \psi_i - \sum_{i\in S} d_{S} = \frac{1}{12} d_{\irr}. $$ 

Suppose 
$D_{\bf a}$ has class 
$$ D_{\bf a} = c_{\irr} \delta_{\irr} + \sum_{|S|\geq 2} c_{S} \delta_{0; \{ S\}} $$
with unknown coefficients $c_{\irr}$ and $c_{S}$. By \cite[page 11]{GrushevskyZakharov}, the zero section of $\JJ$ 
vanishes along the boundary divisor $\delta_{0; \{1, \ldots, n \}}$ with multiplicity one.  
Applying the Grothendieck-Riemann-Roch formula to the push-forward of the section $\sum_{i=1}^n a_i \sigma_i$, we conclude that 
\begin{eqnarray*} 
B \cdot D_{\bf a} + d_{\{1, \ldots, n \}} & = & c_1\Big(R^1\pi_{*}\sum_{i=1}^n a_i \sigma_i\Big) \\
        & = & - \pi_{*}\Big(\Big(  1 + \sum_{i=1}^n a_i \sigma_i + \frac{1}{2}\Big( \sum_{i=1}^n a_i \sigma_i\Big)^2\Big)\Big( 1 - \frac{\omega}{2} + \frac{\omega^2 + \eta}{12}\Big)\Big) \\
        & = & -\frac{1}{12} d_{\irr} + \frac{1}{24}\Big(\sum_{i=1}^n a_i^2\Big) d_{\irr} - \sum_S\sum_{\{i,j \}\subset S} a_i a_j d_S. 
\end{eqnarray*}

By comparing coefficients on the two sides of the equation, we obtain that 
$$ 12 c_{\irr} = -1 + \frac{1}{2}\sum_{i=1}^n a_i^2, $$
$$ c_{S} =  - \sum_{\{i,j \}\subset S} a_i a_j, \ S\neq \{1,\ldots, n\}, $$
$$ c_{\{1,\ldots,n\}} = - \sum_{1\leq i < j \leq n} a_i a_j -1 = -1 +  \frac{1}{2}\sum_{i=1}^n a_i^2, $$
where the last equality uses the assumption $\sum_{i=1}^n a_i = 0$. 
Hence, we conclude the following. 
\begin{proposition}
\label{prop:totalclass}
The divisor class of $D_{\bf a}$ is given by  
$$ D_{\bf a} = \Big(-1 + \frac{1}{2}\sum_{i=1}^n a_i^2\Big) (\lambda + \delta_{0; \{1,\ldots, n\}}) - \sum_{2 \leq |S| < n} \Big(\sum_{\{i,j \}\subset S} 
a_ia_j\Big) \delta_{0;S}. $$
\end{proposition}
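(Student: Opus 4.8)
The plan is to determine the coefficients $c_{\irr}$ and $c_S$ in the ansatz $D_{\bf a}=c_{\irr}\delta_{\irr}+\sum_{|S|\geq 2}c_S\delta_{0;S}$ by the test-curve method, intersecting $D_{\bf a}$ against the general one-dimensional family $\pi\colon\CC\to B$ set up above. Restricting the boundary classes to $B$ gives $\delta_{\irr}\cdot B=d_{\irr}$ and $\delta_{0;S}\cdot B=d_S$, so that $B\cdot D_{\bf a}=c_{\irr}d_{\irr}+\sum_S c_S d_S$. Since one can build test families in which the parameters $d_{\irr}$ and the various $d_S$ take essentially independent values, it suffices to compute $B\cdot D_{\bf a}$ by a second, intrinsic method and then match the coefficient attached to each $d$.

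The substantive input is that second computation, carried out by Grothendieck-Riemann-Roch. The section $\sum_{i=1}^n a_i\sigma_i$ cuts out a line bundle of fiberwise degree $\sum_i a_i=0$, so $R^0\pi_*=0$ on a general fiber and the class we want is $c_1\big(R^1\pi_*\sum_i a_i\sigma_i\big)$. I would apply GRR with the relative Todd class $1-\tfrac{\omega}{2}+\tfrac{\omega^2+\eta}{12}$ corrected for the nodes, expand the product to its degree-one part, and substitute the standard relations $\omega^2=-\sum_S d_S$, $\sigma_i\cdot\sigma_j=\sum_{\{i,j\}\subset S}d_S$, $\omega\cdot\sigma_i=-\sigma_i^2=\tfrac1{12}d_{\irr}$, and $\pi_*\eta=d_{\irr}+\sum_S d_S$. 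The outcome is $-\tfrac1{12}d_{\irr}+\tfrac1{24}\big(\sum_i a_i^2\big)d_{\irr}-\sum_S\sum_{\{i,j\}\subset S}a_ia_j\,d_S$, which computes the intersection with $B$ of the full pullback of the zero section of $\JJ$.

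The step I expect to be the main obstacle is the boundary bookkeeping. What GRR literally computes is the class of the pullback of the zero section, and this pullback differs from $D_{\bf a}$ along $\delta_{0;\{1,\ldots,n\}}$: when all marked points collide, the relation $\sum_i a_ip_i=0$ holds automatically because $\sum_i a_i=0$, so the zero section vanishes there. I would cite \cite[page 11]{GrushevskyZakharov} to pin the vanishing order at one, which is exactly why the intrinsic computation equals $B\cdot D_{\bf a}+d_{\{1,\ldots,n\}}$ rather than $B\cdot D_{\bf a}$. One must also verify that $D_{\bf a}$ contains no other boundary divisor, so that matching coefficients recovers the class of $D_{\bf a}$ itself rather than that of its boundary-laden pullback.

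Matching the coefficient of $d_{\irr}$ yields $12c_{\irr}=-1+\tfrac12\sum_i a_i^2$, matching that of $d_S$ for $S\neq\{1,\ldots,n\}$ yields $c_S=-\sum_{\{i,j\}\subset S}a_ia_j$, and matching that of $d_{\{1,\ldots,n\}}$ together with the $+d_{\{1,\ldots,n\}}$ correction gives $c_{\{1,\ldots,n\}}=-\sum_{i<j}a_ia_j-1=-1+\tfrac12\sum_i a_i^2$, using $\big(\sum_i a_i\big)^2=0$. Finally I would convert to the stated basis: since $\delta_{\irr}=12\lambda$ we have $c_{\irr}\delta_{\irr}=\big(-1+\tfrac12\sum_i a_i^2\big)\lambda$, whose coefficient coincides with that of $\delta_{0;\{1,\ldots,n\}}$, and grouping these two terms produces exactly the formula in Proposition~\ref{prop:totalclass}.
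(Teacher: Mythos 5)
Your proposal is correct and follows essentially the same route as the paper: the same test-family ansatz, the same Grothendieck--Riemann--Roch computation of $c_1(R^1\pi_*\sum_i a_i\sigma_i)$, and the same correction by $d_{\{1,\ldots,n\}}$ justified via the multiplicity-one vanishing of the zero section along $\delta_{0;\{1,\ldots,n\}}$ from \cite[page 11]{GrushevskyZakharov}. The coefficient matching and the final regrouping using $\delta_{\irr}=12\lambda$ also coincide with the paper's argument.
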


Therefore, adding $\delta_{0; \{1, \ldots, n \}}$ to $D_{\bf a}$, we recover the divisor class calculated in \cite[Theorem 12.1]{haindivisor}. 

\subsection{Irreducible components of $D_{\bf a}$}
\label{sec:components}

The divisor $D_{\bf a}$ is not always irreducible. For instance for $D_{(4,-4)}$ on $\BM_{1,2}$, the condition is $4p_1 - 4p_2 = 0$. There are two possibilities, $2p_1 - 2p_2 = 0$ and $2p_1 - 2p_2 \neq 0$, each yielding a component for $D_{(4,-4)}$. 
In general for $n\geq 3$, if $\gcd (a_1, \ldots, a_n) = 1$, then $D_{\bf a}$ is irreducible. If $\gcd (a_1, \ldots, a_n)  > 1$, then $D_{\bf a}$ contains more than one component. Below we will prove this statement and calculate the divisor class of each irreducible component. 

First, consider the special case $n=2$. Let $\eta(d)$ denote the number of positive integers that divide $d$. 

\begin{proposition}
\label{prop:component-2}
Suppose $a$ is an integer bigger than one. Then the divisor $D_{(a,-a)}$ in $\BM_{1,2}$ consists of $\eta(a) - 1$ irreducible components. 
\end{proposition}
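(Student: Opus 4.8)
The plan is to work in the interior $\MM_{1,2}$ and stratify $D_{(a,-a)}$ by the exact order of the difference of the two marked points. Using $p_1$ as the origin of the group law on a smooth genus one curve $E$, the defining condition $a(p_1-p_2)=0$ in $\Jac(E)$ says precisely that $Q:=p_2-p_1$ is a nonzero $a$-torsion point of $(E,p_1)$. Since the marked points are distinct we have $Q\neq 0$, so the order $d$ of $Q$ is a divisor of $a$ with $d>1$. Writing $U_d\subset \MM_{1,2}$ for the locus where $Q$ has exact order $d$, we obtain a set-theoretic decomposition of $D_{(a,-a)}\cap\MM_{1,2}$ into the $\eta(a)-1$ pieces $U_d$, one for each divisor $d\mid a$ with $d>1$.

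Next I would show that each $U_d$ is open and closed in $D_{(a,-a)}\cap\MM_{1,2}$. This follows because the order of a torsion section is locally constant: over $\MM_{1,1}$ the $m$-torsion of the universal curve is finite \'etale for every $m$, so along the locus $\{aQ=0\}$ the function $Q\mapsto \operatorname{ord}(Q)$ cannot jump. Consequently each $U_d$ is a union of connected components of the interior of $D_{(a,-a)}$, and the proposition reduces to proving that each $U_d$ is irreducible of dimension one; taking closures in $\BM_{1,2}$ then produces exactly $\eta(a)-1$ distinct irreducible components (distinct because the $U_d$ are disjoint and Zariski dense in their closures).

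To see that $U_d$ is irreducible I would identify it with the modular curve $Y_1(d)$: the assignment $(E,p_1,p_2)\mapsto (E,Q)$ exhibits $U_d$ as the moduli of pairs consisting of a smooth genus one curve together with a point of exact order $d$, taken up to the elliptic involution $Q\mapsto -Q$ (which is the isomorphism $(E,p_1,p_2)\cong(E,p_1,2p_1-p_2)$). This moduli space is the quotient $\Gamma_1(d)\backslash\mathbb{H}$ of the connected upper half-plane $\mathbb{H}$, hence is connected and, being a smooth curve, irreducible. Equivalently, one can run a monodromy argument: the exact-order-$d$ points form a finite \'etale cover of $\MM_{1,1}$ whose fiber is the set of primitive vectors in $(\bbZ/d)^2$, the monodromy is the image of the surjection $\SL_2(\bbZ)\to \SL_2(\bbZ/d)$, and $\SL_2(\bbZ/d)$ acts transitively on primitive vectors, so the cover is connected.

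The main obstacle is the irreducibility of each stratum $U_d$, that is, the connectedness input from the theory of modular curves (or, equivalently, the transitivity of $\SL_2(\bbZ/d)$ on primitive $d$-torsion together with the surjectivity of $\SL_2(\bbZ)\to\SL_2(\bbZ/d)$). A secondary point requiring care is that passing from the interior strata to their closures in $\BM_{1,2}$ does not merge two of them; this is clear since the $U_d$ remain disjoint and each is dense in the corresponding component, so the count of irreducible components is unaffected by the boundary.
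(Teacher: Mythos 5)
Your proposal is correct and follows essentially the same route as the paper: decompose the locus by the exact order $d$ of $p_2-p_1$ and show each stratum is irreducible via transitivity of the monodromy on primitive $d$-torsion (the paper computes the monodromy generators $(x,y)\mapsto(x+y,y)$ and $(x,y)\mapsto(x,x+y)$ explicitly by shearing the lattice, which is exactly the surjection onto $\SL_2(\bbZ/d)$ acting on primitive vectors that you invoke, or equivalently the connectedness of $Y_1(d)$). Your additional remarks that the strata are open and closed in the interior and that taking closures does not merge components are worthwhile details that the paper leaves implicit.
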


\begin{proof}
By definition, $D_{(a,-a)}$ is the closure of the locus parameterizing $(E; p_1, p_2)$ such that $p_2 - p_1$ is a non-zero $a$-torsion. Take the square 
$[0, a]\times [0,ai]$ and glue its parallel edges to form a torus $E$. Fix $p_1$ as the origin of $E$. The number of $a$-torsion points $p_2$ is equal to $a^2$ and the coordinates $(x, y)$ of each $a$-torsion point satisfy $x, y\in \bbZ / a$. 

When varying the lattice structure of $E$, the monodromy group acts on $(x,y)$. Suppose we fix the horizontal edge and shift the vertical edge to the right until we obtain a parallelogram spanned by $[0,a]\times [0, a(1+i)]$. The resulting torus is isomorphic to $E$. Consequently the monodromy action sends an $a$-torsion point $(x, y)$ to $(x+y, y)$. Similarly, we may also obtain the action sending $(x, y)$ to $(x, x+y)$. Then each orbit of the monodromy action is uniquely determined by $k = \gcd (x, y, a)$. In other words, the monodromy is transitive on the primitive $a'$-torsion points for each divisor $a'$ of $a$, where $a' = a / k$. 
Hence, the number of its orbits is $\eta(a)$. Each orbit gives rise to an irreducible component of $D_{(a,-a)}$ parameterizing $(E, p_1, p_2)$ such that $p_2 - p_1$ is a primitive $a'$-torsion, where $a$ is divisible by $a'$. Moreover, when $a'=1$, i.e. $p_2 = p_1$, the corresponding component is $\delta_{0; \{1,2 \}}$, hence we need to exclude it by our setting. 
\end{proof}

Next, we consider the case $n\geq 3$. If $m$ entries of ${\bf a}$ are zero, drop them and denote by ${\bf a}'$ the resulting $(n-m)$-tuple. Then we have $D_{\bf a} = \pi^{*} D_{{\bf a}'}$, where $\pi: \BM_{1,n}\to \BM_{1,n-m}$ is the map forgetting the corresponding marked points. Since the fiber of $\pi$ over a general point in $D_{{\bf a}'}$ is irreducible, we conclude that $D_{\bf a}$ and $D_{{\bf a}'}$ possess the same number of irreducible components. It remains to consider the case when all entries of ${\bf a}$ are non-zero. 

\begin{proposition}
\label{prop:component-n}
Suppose $n\geq 3$ and all entries of ${\bf a}$ are non-zero. Let $d = \gcd (a_1, \ldots, a_n)$. Then $D_{\bf a}$ consists of $\eta(d)$ irreducible components. 
\end{proposition}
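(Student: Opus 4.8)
The plan is to reduce the defining condition to a torsion condition and then count components by the same monodromy computation used in Proposition~\ref{prop:component-2}, the only genuinely new ingredient being that the relevant fibers are now positive--dimensional and must be shown to be irreducible. Write $a_i = d\,b_i$, so that $\gcd(b_1,\dots,b_n)=1$ and, since $\sum a_i = 0$, also $\sum b_i = 0$. Set $\xi = \sum_{i=1}^n b_i p_i$; this is a well-defined degree--zero class in $\Jac(E)$ because $\sum b_i = 0$, and the condition $\sum a_i p_i = 0$ is equivalent to $d\,\xi = 0$, i.e.\ to $\xi$ being a $d$-torsion point of $\Jac(E)$. The crucial elementary observation is that for every index $j$ one has $\gcd(b_i : i\ne j) = 1$: indeed $b_j = -\sum_{i\ne j} b_i$, so any common divisor of the $b_i$ with $i\ne j$ divides $b_j$ as well, and hence divides $\gcd(b_1,\dots,b_n)=1$.

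Next I would organize $D_{\bf a}^{\circ}:=D_{\bf a}\cap \MM_{1,n}$ by the order of $\xi$. That order is a divisor $d'$ of $d$ and is preserved by monodromy, which acts on the $d$-torsion by group automorphisms; so it is constant on connected components and $D_{\bf a}^{\circ} = \bigsqcup_{d'\mid d} D_{{\bf a},d'}$, where $D_{{\bf a},d'}$ is the locus on which $\xi$ has order exactly $d'$. It then suffices to show that each $D_{{\bf a},d'}$ is nonempty and irreducible, for then their closures are $\eta(d)$ distinct irreducible components of $D_{\bf a}$ (distinct because they are irreducible of the same dimension $n-1$, so none can contain another, and $D_{\bf a}$ is the closure of $D_{\bf a}^{\circ}$). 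I would study $D_{\bf a}^{\circ}$ through the forgetful fibration $D_{\bf a}^{\circ}\to \MM_{1,1}$ remembering $(E,p_1)$, using $p_1$ as the origin of the group law. The fiber over $(E,p_1)$ is the locus of distinct points $(p_2,\dots,p_n)$, none equal to the origin, with $\sum_{i\ge 2} b_i p_i \in E[d]$. Consider the homomorphism $\nu\colon E^{n-1}\to E$, $\nu(p_2,\dots,p_n) = \sum_{i\ge 2} b_i p_i$; since $\gcd(b_2,\dots,b_n)=1$ the induced map on $H_1$ is surjective, so $\pi_0(\ker\nu)=\operatorname{coker}(H_1\nu)=0$ and $\ker\nu$ is connected of dimension $n-2$. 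Hence each $\nu^{-1}(\xi)$ is irreducible, and removing the diagonals $p_i=p_j$ and the loci $p_i = p_1$ --- a proper closed subset, since for $n\ge 3$ the subvariety $\ker\nu$ surjects onto each coordinate and so lies in none of these divisors --- keeps it irreducible. Therefore $\pi_0$ of the fiber is exactly $E[d]$, indexed by the value of $\xi$.

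Finally I would invoke the monodromy of the local system $E[d]$ over $\MM_{1,1}$: the connected components of $D_{\bf a}^{\circ}$ are the orbits of $\pi_1(\MM_{1,1})$ acting on $\pi_0(\text{fiber})=E[d]\cong(\bbZ/d)^2$, and this action is the standard one through $\SL_2(\bbZ)\to\SL_2(\bbZ/d)$. By exactly the shear computation of Proposition~\ref{prop:component-2}, the orbits are indexed by $\gcd(x,y,d)$, equivalently by the order $d'=d/\gcd(x,y,d)$, giving one orbit for each divisor $d'$ of $d$. Consequently $D_{\bf a}^{\circ}$ has exactly $\eta(d)$ connected components, each fibered over the irreducible base $\MM_{1,1}$ with irreducible fibers and hence irreducible, which proves the claim.

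I expect the main obstacle to be the fiber analysis in the third step: verifying that $\ker\nu$ is connected --- which is precisely where the symmetric gcd observation $\gcd(b_2,\dots,b_n)=1$ is essential --- and that intersecting with the open configuration locus preserves irreducibility, with the case $n=3$, where the fibers are only curves, needing slightly more care. It is also worth emphasizing the structural contrast with Proposition~\ref{prop:component-2}: there the order-one stratum $\xi=0$ forces $p_1=p_2$, so it lies in the boundary and must be discarded, yielding $\eta(a)-1$ components; here, for $n\ge 3$, the stratum $\xi=0$ is the honest interior divisor $D_{\bf b}$, which supplies the extra component and explains the count $\eta(d)$ rather than $\eta(d)-1$.
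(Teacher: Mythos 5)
Your proof is correct, but it takes a genuinely different route from the paper's. The paper first performs a Euclidean-type reduction: replacing $p_n$ by $p_n'=2p_1-p_n$ turns ${\bf a}$ into $(a_1+2a_n,a_2,\ldots,a_{n-1},-a_n)$ without changing the number of components, and iterating (together with permutations) brings one entry down to $d$; at that point $p_n$ is a $d$-torsion translate of a point determined by $p_1,\ldots,p_{n-1}$, and the $\SL_2(\bbZ)$-orbit count from Proposition~\ref{prop:component-2} is applied fiberwise over $\MM_{1,n-1}$. You skip the reduction entirely by working with the class $\xi=\sum_i (a_i/d)\,p_i\in E[d]$ and fibering over $\MM_{1,1}$: the observation that $\gcd(b_i : i\neq j)=1$ for every $j$ makes the induced map on $H_1$ surjective, hence $\ker\nu$ connected, so the components of the fiber are indexed exactly by $E[d]$, and the same $\SL_2(\bbZ/d)$ monodromy computation finishes the count. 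Your version makes the role of the hypothesis $\gcd(a_1,\ldots,a_n)=d$ structurally transparent and avoids the mildly delicate check that the elementary transformations induce bijections on components; the paper's version is more elementary and reduces everything to the two-point analysis already done. One small repair is needed at the end: the claim that each stratum $D_{{\bf a},d'}$ is ``fibered over $\MM_{1,1}$ with irreducible fibers'' is not literally true when there is more than one point of exact order $d'$ in $E[d]$, since the fiber over $(E,p_1)$ is then a union of several translates $\nu^{-1}(\xi)$. Either fiber instead over the irreducible modular cover parameterizing triples $(E,p_1,\xi)$ with $\xi$ of exact order $d'$ (irreducible precisely by the transitivity you quote), or simply observe that $D_{\bf a}\cap\MM_{1,n}$ is smooth because the map to the universal Jacobian is a submersion, so the connected components that your monodromy argument correctly enumerates are automatically irreducible.
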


\begin{proof}
If an entry of ${\bf a}$ equals $1$ or $-1$, say $a_n = 1$, then we can freely choose $p_1, \ldots, p_{n-1}$ and a general choice uniquely determines $p_n$. In other words, $D_{\bf a}$ is birational to $\MM_{1,n-1}$ which is irreducible. 

Suppose all the entries are different from $1$ and $-1$. Fix $p_1, \ldots, p_{n-1}$ and replace $p_{n}$ by $p'_{n} = 2p_{1} - p_n$, then ${\bf a} = (a_1, \ldots, a_n)$ becomes ${\bf a}' = (a_1 + 2a_n, a_2, \ldots, a_{n-1}, -a_n)$. Note that $p_n$ and $p'_n$ uniquely determine each other, and for general points in $D_{\bf a}$ we have $p'_n \neq p_i$ for $1\leq i < n$, otherwise we would have $|a_i| = |a_n| = 1$. Hence the components of $D_{\bf a}$ and $D_{{{\bf a}'}}$ have a one to one correspondence. Using such transformations, we can decrease $\min\{|a_1|, \ldots, |a_n|\}$ until one of the entries is equal to $d$, say $a_n =  d$. 

Now fix $p_1, \ldots, p_{n-1}$ and set $\sum_{i=1}^{n-1} a_i p_i$ to be the origin of $E$. Then $p_n$ is a $d$-torsion. Analyzing the monodromy associated 
to $D_{\bf a}\dashrightarrow \MM_{1,n-1}$ as in the proof of Proposition~\ref{prop:component-2}, we see that $D_{\bf a}$ has at most $\eta(d)$ irreducible components. On the other hand for each positive factor $s$ of $d$, the locus parameterizing $\sum_{i=1}^n b_i p_i = 0$ where $b_i = a_i / s$ gives rise to at least one component of $D_{\bf a}$. Hence $D_{\bf a}$ contains exactly $\eta(d)$ irreducible components. Since $n\geq 3$, none of these components is a boundary divisor of $\BM_{1,n}$. 
\end{proof}

Let us calculate the divisor class of each component of $D_{\bf a}$. As in the proof of Proposition~\ref{prop:component-n}, let 
$D'_{{\bf a}}$ be the irreducible component of $D_{\bf a}$ such that $\sum_{i=1}^n a_i p_i = 0$ but 
$\sum_{i=1}^n (a_i/s) p_i \neq 0$ for general points in $D'_{{\bf a}}$ and any $s$ dividing $\gcd (a_1, \ldots, a_n)$. 

\begin{proposition}
\label{prop:componentclass}
Suppose $\gcd (a_1, \ldots, a_n) = d > 1$. Then the divisor $D'_{{\bf a}}$ has class 
$$ D'_{{\bf a}} = \prod_{p|d} \Big(1 - \frac{1}{p^2}\Big) (D_{\bf a} +   \lambda + \delta_{0; \{1,\ldots, n\}}),$$
where the product ranges over all primes $p$ dividing $d$. 
\end{proposition}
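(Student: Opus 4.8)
The plan is to compare $D_{\bf a}$ with the rescaled divisors $D_{s{\bf b}}$, where ${\bf b} = {\bf a}/d = (a_1/d,\ldots,a_n/d)$ is the primitive tuple and $s$ ranges over the divisors of $d$. Writing $q = \sum_{i=1}^n b_i p_i \in \Jac(E)$, the locus $D_{s{\bf b}}$ is exactly where $sq = 0$, i.e.\ where the order of $q$ divides $s$. As in the proof of Proposition~\ref{prop:component-n}, its irreducible components are the loci $D'_{m{\bf b}}$ on which $q$ has exact order $m$, one for each divisor $m$ of $s$; in particular $D'_{\bf a} = D'_{d{\bf b}}$ is the component where $q$ has exact order $d$. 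First I would record the set-theoretic decomposition $D_{s{\bf b}} = \bigcup_{m \mid s} D'_{m{\bf b}}$ and upgrade it to the class identity
$$ [D_{s{\bf b}}] = \sum_{m \mid s} [D'_{m{\bf b}}], \qquad s \mid d, $$
by checking that each component occurs with multiplicity one. Concretely, a general pencil $B$ meets $D_{s{\bf b}}$ along the preimage under $q$ of the $s$-torsion sections of $\JJ$; since these torsion sections are reduced (multiplication by $s$ is \'etale in characteristic zero) and met transversally by $q$ for a general family, the scheme-theoretic pullback is reduced and each $D'_{m{\bf b}}$ appears with coefficient one.

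The second ingredient is purely formal. Proposition~\ref{prop:totalclass} shows that the combination $\gamma_{\bf c} := D_{\bf c} + \lambda + \delta_{0;\{1,\ldots,n\}}$ is a homogeneous quadratic function of the weight vector ${\bf c}$, so that $\gamma_{s{\bf b}} = s^2 \gamma_{\bf b}$ for every $s$. Substituting $[D_{s{\bf b}}] = \gamma_{s{\bf b}} - (\lambda + \delta_{0;\{1,\ldots,n\}}) = s^2\gamma_{\bf b} - (\lambda + \delta_{0;\{1,\ldots,n\}})$ into the M\"obius inversion of the displayed class identity over the divisor lattice of $d$ (with $\mu$ the M\"obius function),
$$ [D'_{\bf a}] = [D'_{d{\bf b}}] = \sum_{s \mid d} \mu(d/s)\,[D_{s{\bf b}}], $$
gives $[D'_{\bf a}] = \big(\sum_{s\mid d}\mu(d/s)s^2\big)\gamma_{\bf b} - \big(\sum_{s\mid d}\mu(d/s)\big)(\lambda + \delta_{0;\{1,\ldots,n\}})$.

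To finish I would evaluate the two arithmetic sums. Since $d > 1$ we have $\sum_{s\mid d}\mu(d/s) = \sum_{t \mid d}\mu(t) = 0$, so the $(\lambda + \delta_{0;\{1,\ldots,n\}})$ term drops out, while $\sum_{s\mid d}\mu(d/s)s^2 = d^2\sum_{t\mid d}\mu(t)/t^2 = d^2\prod_{p\mid d}(1 - 1/p^2)$ is the Jordan totient $J_2(d)$. Using finally $\gamma_{\bf b} = d^{-2}\gamma_{\bf a}$ yields $[D'_{\bf a}] = \prod_{p\mid d}(1 - 1/p^2)\,\gamma_{\bf a}$, which is the asserted formula.

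I expect the only genuine obstacle to be the multiplicity-one claim in the first step. The reason it cannot be bypassed by linear algebra is that, by the homogeneity $\gamma_{m{\bf b}} = m^2\gamma_{\bf b}$, every component class $[D'_{m{\bf b}}]$ is a scalar multiple of the single class $\gamma_{\bf b}$; the components are therefore numerically proportional, and the total class $[D_{\bf a}]$ by itself carries no information about how the multiplicities are distributed among them. One genuinely needs the geometric input that the pullback of the torsion locus is reduced---equivalently the monodromy and transversality analysis already used in Propositions~\ref{prop:component-2} and~\ref{prop:component-n}---to pin each multiplicity to one. Everything else is the elementary M\"obius and Jordan-totient bookkeeping above.
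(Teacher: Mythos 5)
Your argument is correct and follows essentially the same route as the paper: the decomposition $D_{s{\bf b}} = \sum_{m\mid s} D'_{m{\bf b}}$, the quadratic homogeneity $D_{s{\bf b}} + \lambda + \delta_{0;\{1,\ldots,n\}} = s^2\,(D_{\bf b} + \lambda + \delta_{0;\{1,\ldots,n\}})$ from Proposition~\ref{prop:totalclass}, and the Jordan totient identity $\sum_{t\mid d} t^2\prod_{p\mid t}(1-1/p^2) = d^2$, which the paper establishes by induction where you use M\"obius inversion --- the same arithmetic content. Your explicit justification of the multiplicity-one claim (reducedness of the pullback of the torsion sections, via \'etaleness of multiplication by $s$) addresses a point the paper leaves implicit when it writes $D_{d{\bf b}} = \sum_{t\mid d} D'_{t{\bf b}}$ as an identity of divisor classes.
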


We remark that for $n=2$ the above divisor class was calculated by Cautis \cite[Proposition 3.4.7]{CautisThesis} and also communicated personally to the authors by Hain. 

\begin{proof}
Let $b_i = a_i / d$ and ${\bf b} = (b_1, \ldots, b_n)$. For an integer $m$, use the notation $m {\bf b} = (mb_1, \ldots, mb_n)$. 
Note that 
$$ D_{\bf a} = D_{d{\bf b}} = \sum_{t | d} D'_{t{\bf b}}, $$
where $t$ ranges over all positive integers dividing $d$.  
By Proposition~\ref{prop:totalclass}, we have 
$$ D_{\bf a} +  \lambda + \delta_{0; \{1,\ldots, n\}} = d^2 (D_{\bf b} +  \lambda + \delta_{0; \{1,\ldots, n\}}). $$
For an integer $t\geq 2$, define 
$$\sigma(t) =  t^2 \prod_{p|t} \Big(1 - \frac{1}{p^2}\Big),$$
where the product ranges over all primes $p$ dividing $t$. We also set $\sigma(1) = 1$. 
Using the above observation, it suffices to prove that  
$$ \sum_{t | d }\sigma(t) = d^2 $$
for all $d$.  

In order to prove the above equality, do induction on $d$. Write $d$ as 
$$d = q^m e, $$ 
where $q$ is a prime and $e$ is not divisible by $q$. Let $S_i$ be the set of positive integers $t$ dividing $d$, such that $t$ is divisible by $q^i$ but not by $q^{i+1}$ for any $1\leq i \leq m$. By induction, we have 
$$ \sum_{t\in S_i} \sigma(t) = q^{2i}\Big(1-\frac{1}{q^2}\Big) e^2. $$
Summing over all $i$, we thus obtain that 
$$ \sum_{t|d} \sigma(t) = q^{2m} e^2 = d^2. $$
\end{proof}

\begin{corollary}
If $\gcd (a_1, \ldots, a_n) > 1$, the divisor class $D'_{{\bf a}}$ is not extremal. 
\end{corollary}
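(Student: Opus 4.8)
The plan is to use the class computation of Proposition~\ref{prop:componentclass} to exhibit $D'_{\bf a}$ as a nontrivial sum of two effective classes. Writing $d = \gcd(a_1,\ldots,a_n) > 1$ and ${\bf b} = {\bf a}/d$, Proposition~\ref{prop:componentclass} together with the scaling relation $D_{\bf a} + \lambda + \delta_{0;\{1,\ldots,n\}} = d^2\big(D_{\bf b} + \lambda + \delta_{0;\{1,\ldots,n\}}\big)$ gives
\[
D'_{\bf a} = \sigma(d)\,\big(D_{\bf b} + \lambda + \delta_{0;\{1,\ldots,n\}}\big),
\]
where $\sigma(d) = d^2\prod_{p|d}(1-1/p^2) > 0$. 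I would then split the right-hand side as
\[
D'_{\bf a} = \sigma(d)\,D_{\bf b} + \sigma(d)\,\big(\lambda + \delta_{0;\{1,\ldots,n\}}\big),
\]
and argue that this is a decomposition into two effective classes, neither of which is proportional to $D'_{\bf a}$; by definition this shows $D'_{\bf a}$ does not span an extremal ray of $\BEff(\BM_{1,n})$.

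First I would check effectiveness of each summand. Since ${\bf b}$ is primitive, $D_{\bf b}$ is an honest (indeed irreducible, by Theorem~\ref{thm:main}) divisor, hence its class lies in $\BEff(\BM_{1,n})$. For the second piece, recall $\delta_{\irr} = 12\lambda$, so $\lambda = \tfrac{1}{12}\delta_{\irr}$ is a positive multiple of the effective boundary class $\delta_{\irr}$, while $\delta_{0;\{1,\ldots,n\}}$ is itself an effective boundary divisor; thus $\lambda + \delta_{0;\{1,\ldots,n\}}$ is effective. Together with $\sigma(d) > 0$, both summands are effective.

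The main point, and the step I expect to require the most care, is to verify that neither summand is a nonnegative multiple of $D'_{\bf a}$; equivalently, that $D_{\bf b}$ and $\lambda + \delta_{0;\{1,\ldots,n\}}$ are linearly independent in the rational Picard group. Here I would read off coefficients from Proposition~\ref{prop:totalclass}: the class $\lambda + \delta_{0;\{1,\ldots,n\}}$ has vanishing coefficient along every generator $\delta_{0;S}$ with $2 \leq |S| < n$, whereas the coefficient of $\delta_{0;\{i,j\}}$ in $D_{\bf b}$ equals $-b_ib_j$. Since $\sum_i b_i = 0$ and ${\bf b} \neq 0$, at least two entries of ${\bf b}$ are nonzero, so choosing such a pair $\{i,j\}$ (note that $\{i,j\} \neq \{1,\ldots,n\}$ as $n \geq 3$) gives $b_ib_j \neq 0$. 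This single nonzero coefficient shows $D_{\bf b}$ is not proportional to $\lambda + \delta_{0;\{1,\ldots,n\}}$, hence neither summand is proportional to their sum $D'_{\bf a}$, completing the argument.
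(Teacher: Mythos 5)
Your proof is correct and takes essentially the same route as the paper: the paper's own proof is the one-line observation that Proposition~\ref{prop:componentclass} exhibits $D'_{\bf a}$ as a positive linear combination of the effective classes $D_{\bf a}$, $\lambda$ and $\delta_{0;\{1,\ldots,n\}}$, which are not all proportional. Your version merely rewrites $D_{\bf a}+\lambda+\delta_{0;\{1,\ldots,n\}}$ as $d^2\big(D_{\bf b}+\lambda+\delta_{0;\{1,\ldots,n\}}\big)$ and spells out the effectiveness and non-proportionality checks that the paper leaves implicit.
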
 

\begin{proof}
By Proposition~\ref{prop:componentclass}, $D'_{{\bf a}}$ is a positive linear combination of effective divisor classes, not all proportional. 
\end{proof}

\section{Extremality of $D_{\bf a}$} 
\label{sec:main}

In this section, we will prove Theorem~\ref{thm:main}. Recall that an effective divisor $D$ in a projective variety $X$ is called extremal, if for any linear combination $D = a_1D_1 + a_2D_2$ with $a_i > 0$ and $D_i$ pseudo-effective, $D$ and $D_i$ are proportional. In this case, we say that $D$ spans an extremal ray of the pseudo-effective cone $\BEff(X)$. Furthermore, we say that $D$ is rigid, if for every positive integer $m$ the linear system $|mD|$ consists of the single element 
$mD$. An irreducible effective curve contained in $D$ is called a moving curve in $D$, if its deformations cover a dense subset of $D$. 

Let us first give a method to test the extremality and rigidity for an effective divisor. 

\begin{lemma}
\label{lem:negative}
Suppose that $C$ is a moving curve in an irreducible effective divisor $D$ satisfying $C\cdot D < 0$. Then $D$ is extremal and rigid. 
\end{lemma}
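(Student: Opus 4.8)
The plan is to prove the two claims---extremality and rigidity---essentially in parallel, since both follow from the single numerical hypothesis $C\cdot D < 0$ together with the fact that $C$ is a moving curve.

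For \textbf{extremality}, suppose we are given a decomposition $D = a_1 D_1 + a_2 D_2$ with $a_i > 0$ and each $D_i$ pseudo-effective. I would intersect both sides with the moving curve $C$ to get
\begin{equation*}
C\cdot D = a_1\, (C\cdot D_1) + a_2\, (C\cdot D_2) < 0.
\end{equation*}
Since the $a_i$ are positive, at least one factor $C\cdot D_i$ must be strictly negative, say $C\cdot D_1 < 0$. The key point is that a pseudo-effective divisor meeting a moving curve negatively must contain the base locus of that curve: because $C$ moves in a family covering a dense subset of $D$, and an irreducible curve with negative intersection against an effective (or pseudo-effective, after passing to an effective representative of a multiple) class must lie in the support of that class, every member of the covering family is forced into the support of $D_1$. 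Hence $D \subseteq \operatorname{Supp}(D_1)$, and since $D$ is an irreducible divisor this means $D_1$ contains $D$ as a component; writing $D_1 = c\,D + D_1'$ with $c > 0$ and $D_1'$ effective, one checks $D_1'$ must vanish (otherwise reintersecting with $C$ and using irreducibility of $D$ contradicts the bookkeeping), so $D_1$ is proportional to $D$, and then so is $D_2$. This establishes that $D$ spans an extremal ray.

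For \textbf{rigidity}, I would argue that for every $m \geq 1$ the only effective divisor linearly equivalent to $mD$ is $mD$ itself. Take any effective $D'' \equiv mD$. Intersecting with $C$ gives $C\cdot D'' = m\,(C\cdot D) < 0$, so by the same moving-curve argument every deformation of $C$ lies in $\operatorname{Supp}(D'')$; thus $D \subseteq \operatorname{Supp}(D'')$ and $D''$ contains $D$ with some positive multiplicity. Peeling off a copy of $D$ and inducting on $m$ (the residual class is $(m-1)D$, again effective, with the inductive hypothesis applying), I expect to force $D'' = mD$. This shows $|mD| = \{mD\}$, which is exactly rigidity.

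The \textbf{main obstacle} is making rigorous the ``negative intersection forces containment, hence the whole covering family is absorbed'' step, since a pseudo-effective class need not be effective and a single numerical inequality must be upgraded to a statement about supports. The clean way to handle this is to pass to an effective representative: a pseudo-effective divisor is a limit of effective $\bbQ$-divisors, so for the extremality argument one should phrase the containment via the standard fact that an irreducible curve $C$ with $C\cdot D_i < 0$ cannot be covering unless $C \subseteq \operatorname{Supp}(D_i)$, applied after reducing to effective classes by scaling and approximation. I would keep this reduction explicit and then let the irreducibility of $D$ and the density of the deformations of $C$ do the rest; the remaining steps are elementary bookkeeping with effective classes.
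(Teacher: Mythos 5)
Your overall strategy is the same as the paper's (intersect the decomposition with $C$, deduce $C\cdot D_i<0$ for some $i$, and use that a moving curve in $D$ meeting an effective divisor negatively forces that divisor to contain $D$), and your rigidity argument is correct and essentially identical to the paper's. The extremality half, however, has a genuine gap at the step ``one checks $D_1'$ must vanish (otherwise reintersecting with $C$ \dots contradicts the bookkeeping).'' It does not. Writing $D_1=cD+D_1'$ with $D_1'$ (pseudo-)effective and not containing $D$, reintersecting with $C$ only yields $C\cdot D_1'\geq 0$, hence the lower bound $c\geq (C\cdot D_1)/(C\cdot D)>0$; the residual $D_1'$ could a priori be any pseudo-effective class nonnegative on $C$ (an ample class, say), so nothing forces it to vanish and you have not shown $D_1\propto D$. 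Closing this requires using that \emph{both} $D_i$ are pseudo-effective and that $a_1D_1+a_2D_2=D$. The paper does so by first normalizing $D_1,D_2$ to lie on the boundary of $\BEff(X)$ in the plane they span, so that $D_i-\epsilon D$ is not pseudo-effective for any $\epsilon>0$, and then deriving the contradiction that $D_1-\bigl((C\cdot D_1)/(C\cdot D)\bigr)D$ \emph{is} pseudo-effective; alternatively, peel both $D_i$, note the coefficients of $D$ sum to $1$, so the residuals sum to a numerically trivial pseudo-effective class and hence each vanishes (pair with a complete-intersection curve class).

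The second issue is the one you flag as the main obstacle but do not resolve: upgrading ``$D_1$ pseudo-effective with $C\cdot D_1<0$'' to a support statement. Bare approximation by effective classes is insufficient, because each approximant contains $D$ with \emph{some} positive multiplicity but without a uniform lower bound that multiplicity could degenerate to $0$ in the limit. The paper's device is to set $F_n=nD_1+A$ with $A$ very ample, show $D$ occurs in the base locus of $F_n$ with multiplicity at least $(n-a)/b$, where $a=C\cdot A$ and $-b=C\cdot D$ (subtract $D$ repeatedly as long as the intersection with $C$ stays negative), and pass to the limit of $\bigl(F_n-\tfrac{n-a}{b}D\bigr)/n$ to conclude $D_1-D/b$ is pseudo-effective. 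Equivalently, once an effective representative is written as $mD+E'$ with $E'$ not containing $D$, intersecting with $C$ gives $m\geq -(C\cdot E)/b$, which is exactly the uniform bound needed. Some version of this quantitative step must be made explicit for your ``scaling and approximation'' to go through.
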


\begin{proof}
Let us first prove the extremality of $D$. Suppose that $D = a_1D_1 + a_2D_2$ with $a_i >0$ and $D_i$ pseudo-effective. If $D_1$ and $D_2$ are not proportional to $D$, we can assume that they lie in the boundary of $\BEff(X$) and moreover that $D_i - \epsilon D$ is not pseudo-effective for any $\epsilon > 0$. Otherwise, we can replace $D_1$ and $D_2$ by the intersections of their linear span with the boundary of $\BEff(X)$. 

Since $C\cdot D<0$, at least for one of the $D_i$'s, say $D_1$, we have $C\cdot D_1 < 0$. Without loss of generality, rescale the class of $D_1$ such that 
$C\cdot D_1 = - 1$. Take a very ample divisor class $A$ and consider the class $F_n = n D_1 + A$ for $n$ sufficiently large. Then $F_n$ can be represented by an effective divisor. Suppose $C\cdot A = a$ and $C\cdot D = - b$ for some $a, b > 0$. Note that if $C$ has negative intersection with an effective divisor, then it is contained in that divisor. Since $C$ is moving in $D$, it further implies that $D$ is contained in that divisor. It is easy to check that 
$C\cdot (F_n - k D) < 0$ for any $k < (n-a)/b$. Moreover, the multiplicity of $D$ in the base locus of $F_n$ is at least equal to $(n-a)/b$. 
Consequently $E_n = F_n - (n-a)D/b$ is a pseudo-effective divisor class. As $n$ goes to infinity, 
the limit of $ E_n /n$ has class $D_1 - D/b$. Since $E_n$ is pseudo-effective, we conclude that $D_1 - D/b$ is also pseudo-effective, contradicting the assumption 
that $D_1 - \epsilon D$ is not pseudo-effective for any $\epsilon > 0$. 

Next, we prove the rigidity. Suppose for some integer $m$ there exists another effective divisor $D'$ such that $D' \sim mD$. Without loss of generality, assume that $D'$ does not contain $D$, for otherwise we just subtract $D$ from both sides. Since $C\cdot D < 0$, we have $C\cdot D' < 0$, and hence $D'$ contains $C$. But $C$ is moving in $D$, hence 
$D'$ has to contain $D$, contradicting the assumption.  
\end{proof}

Although we can give a uniform proof of Theorem~\ref{thm:main} as in Section~\ref{sec:n>3}, for the reader to get a feel, let us first discuss the case $n=3$ in detail. 

\subsection{Geometry of $\BM_{1,3}$} 
\label{sec:n=3}

Let ${\bf a} = (a_1, a_2, a_3)$. If $a_3 = 0$, then $a_2 = -a_1$ are not relatively prime unless they are $1$ and $-1$. But then $p_1 = p_2$, hence the locus corresponds to the boundary divisor $\delta_{0; \{1,2\}}$. Therefore, below we assume that $\gcd(a_1, a_2, a_3) = 1$ and none of the $a_i$'s is zero. 


Fix a smooth genus one curve $E$ with a marked point $p_1$. Vary two points $p_2, p_3$ on $E$ such that $\sum_{i=1}^3 a_i p_i = 0$ in the Jacobian of $E$. Let $X$ be the curve induced in $\BM_{1,n}$ by this one parameter family of three pointed genus one curves. We obtain deformations of $X$ by varying the complex structure on $E$. Since these deformations cover a Zariski dense subset of $D_{\bf a}$,  we obtain a moving curve in the divisor $D_{\bf a}$.
We have the following intersection numbers:  
$$ X\cdot \delta_{\irr} = 0, $$
$$X\cdot \delta_{0;\{i,j\}} = a_k^2 - 1 \ \mbox{for}\ k\neq i, j, $$
$$ X\cdot \delta_{0;\{1,2,3\}} = 1. $$

The intersection numbers $X \cdot \delta_{\irr}$ and $X \cdot \delta_{0; \{i,j \}}$ are straightforward. At the intersection with $\delta_{0;\{1,2,3\}}$, $p_1, p_2, p_3$ coincide at the same point $t$ in $E$. Blow up $t$ and we obtain a rational tail $R\cong \bbP^1$ that contains the three marked points. Without loss of generality, suppose 
$a_1 > 0$ and $a_2, a_3 < 0$. The pencil induced by $a_1 p_1 \sim (-a_2)p_2 + (-a_3) p_3$ degenerates to an admissible cover $\pi$ of degree $a_1$. By the  Riemann-Hurwitz formula, $\pi$ is totally ramified at $p_1$, has ramification order $(-a_i)$ at $p_i$ for $i = 2, 3$, and is simply ramified at $t$. Suppose $\pi(p_1) = 0$, $\pi(p_2) = \pi(p_3) = \infty$ and $\pi(t) = 1$ in the target $\bbP^1$. Then in affine coordinates $\pi$ is given by 
$$\pi(x) = \prod_{i=1}^3(x-p_i)^{a_i}. $$
The condition imposed on $t$ is that 
$$(x-p_1)^{a_1} - (x-p_2)^{-a_2}(x-p_3)^{-a_3}$$ 
has a critical point at $t$ and $\pi(t) =1$. Solving for $t$, we easily see that $t$ 
exists and is uniquely determined by $p_1, p_2, p_3$, namely, the four points $t, p_1, p_2, p_3$ have unique moduli in $R$. 
 
Now we can prove Theorem~\ref{thm:main} for the case $n=3$. 

\begin{proof}
Using the divisor class $D_{\bf a}$ in Proposition~\ref{prop:totalclass} and the above intersection numbers, we see that 
$$X\cdot D_{\bf a}  = - 1.$$ 
By assumption both $X$ and 
$D_{\bf a}$ are irreducible. Moreover, $X$ is a moving curve inside $D_{\bf a}$. Therefore, by Lemma~\ref{lem:negative} $D_{\bf a}$ is an extremal and rigid divisor. 

To see that we obtain infinitely many extremal rays of $\BEff(\BM_{1,3})$ this way, let us take ${\bf a} = (n+1, -n, -1)$. Then $D_{(n+1, -n, -1)}$ is irreducible and its divisor class lies on the ray $$c \left( \lambda + \delta_{0,\{1,2,3\}} + \delta_{0,\{1,2\}} + \frac{1}{n} \delta_{0,\{1,3\}} - \frac{1}{n+1} \delta_{0,\{2,3\}} \right), \ \ c>0.$$ As $n$ varies, we obtain infinitely many extremal rays. 
\end{proof}

Next we give a conceptual explanation of the extremality in terms of birational automorphisms of $\BM_{1,3}$. The idea is as follows. We want to find a birational map 
$$f: \BM_{1,3} \dashrightarrow \BM_{1,3}$$ 
such that $f$ and its inverse do not contract any 
divisor, then $f$ preserves the structure of $\Eff(\BM_{1,3})$, i.e. a divisor $D$ is extremal if and only if $f_{*}(D)$ is extremal. Moreover, if $f$ sends $D$ 
to a boundary divisor $\delta_{0;S}$, then $D$ is extremal, since we know $\delta_{0; S}$ is extremal.  

A prototype of such birational automorphisms can be defined as 
$$f: (E; p_1, p_2, p_3) \mapsto (E; q_1, q_2, q_3)$$
such that 
$$q_1 = p_1, $$
$$q_2 = p_2, $$
$$ q_3 = p_2 + p_3 - p_1, $$
where $E$ is a smooth genus one curve with three marked points in general position. Then $f^{-1}$ is accordingly given by 
$$p_1 = q_1, $$ 
$$p_2 = q_2, $$
$$ p_3 = q_1 + q_3 - q_2. $$

Note that $f$ does not extend to a regular map on $\BM_{1,3}$, see Remark~\ref{rem:undefined}. But one can extend $f$ to a regular map in codimension-one, which we still denote by $f$. 

\begin{proposition}
\label{prop:auto}
Away from $D_{(2,-1,-1)}$ and the boundary of $\BM_{1,3}$, $f$ is injective with image contained in $\MM_{1,3}$. 
For general points in each boundary component of $\BM_{1,3}$, $f$ induces the following action: 
$$\delta_{\irr} \mapsto \delta_{\irr}, $$
$$  \delta_{0; \{1,2 \}} \mapsto  \delta_{0; \{1,2\}}, $$ 
$$  \delta_{0; \{1,3 \}} \mapsto  \delta_{0; \{2,3\}}, $$ 
$$  \delta_{0; \{2,3 \}} \mapsto  D_{(-1, 2, -1)}, $$ 
$$   \delta_{0; \{1,2,3 \}} \mapsto  \delta_{0; \{1,2, 3\}}. $$
For general points in $D_{(2,-1,-1)}$, the action induced by $f$ is:    
$$ D_{(2,-1,-1)} \mapsto \delta_{0; \{1,3 \}}. $$ 
\end{proposition}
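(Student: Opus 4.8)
The plan is to treat the two assertions of the proposition separately, using the explicit inverse for the generic statement and a limit computation in the relative Picard scheme for the boundary action.

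For the first claim, since $f$ and $f^{-1}$ are given by inverse birational formulae, $f$ is generically injective, and it remains only to locate the points whose image leaves $\MM_{1,3}$. As the curve $E$ itself stays smooth, the image meets the boundary exactly when two of the $q_i$ collide; because $q_1=p_1$ and $q_2=p_2$ are distinct off the boundary, the only possibilities are $q_3=q_1$ and $q_3=q_2$. I would translate these into conditions in $\Jac(E)$ via the Abel--Jacobi identification $q_3\sim p_2+p_3-p_1$: the relation $q_3=q_1$ reads $2p_1-p_2-p_3=0$, i.e. the locus $D_{(2,-1,-1)}$, while $q_3=q_2$ reads $p_3-p_1=0$, i.e. $p_1=p_3$, a boundary condition. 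Hence away from $D_{(2,-1,-1)}$ and the boundary the points $q_1,q_2,q_3$ are distinct and $f$ is injective with image in $\MM_{1,3}$.

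For the boundary action, the strategy is to extend $f$ to a codimension-one regular map and evaluate it at the generic point of each stratum by choosing a one-parameter family in $\MM_{1,3}$ degenerating transversally to that stratum and computing the stable limit of its image. The key input is that $q_3$ is characterized by $\OO_E(q_3)\cong \OO_E(p_2+p_3-p_1)$, a relation that makes sense in the relative Picard scheme and survives the degeneration: over the generic point of $\delta_{\irr}$ one computes $q_3$ in the group $\mathbb{G}_m$ of the normalized nodal curve, and over the generic point of a divisor $\delta_{0;S}$ one uses that the degree-zero part on a rational component is trivial, so that $q_3$ is computed in $\Jac$ of the genus-one component. Two qualitatively different outcomes occur: either the limiting points remain distinct, in which case the image is a smooth pointed curve subject to whatever linear relation the limit forces, or two of them collide, in which case the stable limit acquires a rational tail and the image lands on a boundary stratum. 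Running through the cases: on $\delta_{\irr}$ the $\mathbb{G}_m$-computation keeps the image a generic nodal curve, and on $\delta_{0;\{1,2\}}$ one finds $q_3=p_3$, so that $f$ is generically the identity; both strata map to themselves. On $\delta_{0;\{1,2,3\}}$ all three points collide, and the induced map on the affine coordinate of the rational tail is the invertible transformation $(u_1,u_2,u_3)\mapsto (u_1,u_2,u_2+u_3-u_1)$, hence dominant on the tail modulus. The remaining three cases are the interesting ones: degenerating $p_1,p_3$ together forces $q_2,q_3$ to collide, producing $\delta_{0;\{2,3\}}$; degenerating $p_2,p_3$ together leaves $q_1,q_2,q_3$ distinct but satisfying $-q_1+2q_2-q_3=0$, landing on $D_{(-1,2,-1)}$; and approaching $D_{(2,-1,-1)}$ along the transverse parameter $\zeta=2p_1-p_2-p_3$ yields $q_3-q_1=-\zeta\to 0$, so that $q_1,q_3$ collide and the image lands on $\delta_{0;\{1,3\}}$.

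The main obstacle is the bubbling cases, where one must verify not merely that two $q_i$ collide but that the image lands on the claimed codimension-one stratum generically, rather than on a deeper one. I would settle this by the explicit one-parameter analysis: writing the colliding points to leading order in the degeneration parameter via the group law on $E$ (for instance $q_3-q_2\approx (u-s)t$ when $p_1,p_3\to r$), one reads off both that exactly one new node appears and that the resulting point of the target stratum is generic, since the relevant rational tails carry only three special points and are therefore rigid, while the genus-one data (a curve $E$ with one marked point and one node) clearly sweep out the corresponding copy of $\BM_{1,2}$ as the source point varies. Verifying the genericity of the tail modulus in the single self-map $\delta_{0;\{1,2,3\}}\mapsto \delta_{0;\{1,2,3\}}$ likewise reduces to the invertibility of the affine transformation above, which gives a nonconstant, hence dominant, self-map of $\BM_{0,4}$.
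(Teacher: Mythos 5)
Your proposal is correct and follows essentially the same route as the paper: a case-by-case evaluation of $q_3=p_2+p_3-p_1$ on each stratum via the group law, with a leading-order/blow-up analysis to pin down the limiting position of $q_3$ on the bubbled rational tail in the collision cases (your affine map $(u_1,u_2,u_3)\mapsto(u_1,u_2,u_2+u_3-u_1)$ matches the paper's blow-up coordinate computation for $\delta_{0;\{1,2,3\}}$, and the rigidity of three-pointed tails handles the genericity checks exactly as in the paper). The only cosmetic difference is that you phrase the degenerate-fiber computations in the relative Picard scheme/generalized Jacobian where the paper uses admissible covers.
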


\begin{proof}
Take a smooth genus one curve $E$ with three distinct marked points $p_1, p_2, p_3$. By definition, we know $q_1\neq q_2$. If $q_2 = q_3$, we get $p_3 = p_1$, contradicting the assumption. If $q_1 = q_3$, we get $2p_1 = p_2 + p_3$, i.e. $(E; p_1, p_2, p_3)$ is contained in $D_{(2,-1,-1)}$. In the complement 
$\MM_{1,3}\backslash D_{(2,-1,-1)}$, it is clear that $f$ is an injection. 

Now let us study the extension of $f$ at the boundary. Note that $p_3$ is sent to its conjugate $q_3$ under the double cover $E\to \bbP^1$ induced by the pencil $|p_1 + p_2|$. Using admissible covers, the conjugate $q_3$ is uniquely determined on a rational one-nodal curve when $p_1, p_2, p_3$ are fixed, distinct and away from the node. Therefore, we conclude that $f$ can be extended to a birational map from $\delta_{\irr}$ to itself. 

Next, consider $\delta_{0; \{1,2\}}$. Take a general point $x = (E\cup_t R; p_1, p_2, p_3)$ in $\delta_{0; \{1,2\}}$, where $t$ is the node, $E$ contains $p_3$ and the rational tail $R$ contains $p_1, p_2$. Blow down $R$ and $p_1, p_2$ stabilize to $t$. By definition, $q_3 = t + p_3 - t = p_3$ is contained in $E$. The rational tail $R$ is still stable containing $q_1 =p_1$  and $q_2 = p_2$. Hence we conclude that $f(x) = (E\cup_t R; q_1, q_2, q_3)\in \delta_{0; \{1,2\}}$, where $E$ contains $q_3$ only. The same argument can be applied to $\delta_{0; \{1,3\}}$ and we leave it to the reader.  

Take a general point $y = (E\cup_t R; p_1, p_2, p_3)$ in $\delta_{0; \{2,3\}}$, where $t$ is the node, $E$ contains $p_1$ and the rational tail $R$ contains $p_2, p_3$. Blow down $R$ and $p_2, p_3$ stabilize to $t$. By definition, $q_3 = t + t - p_1 = 2q_2 - q_1$, i.e. $q_1 - 2q_2 + q_3 = 0$. Therefore, we conclude that $f(y)$ is contained in $D_{(1,-2,1)}$, where $q_2 = t$ in $E$. 

For $\delta_{0; \{1,2,3\}}$, take a one-dimensional family of genus one curves with sections $P_1=Q_1$, $P_2=Q_2$, $P_3$ and $Q_3$ such that in a generic fiber $p_2 + p_3 = p_1 + q_3$ and all the sections meet at the central fiber. Suppose $t$ is the base parameter and $z$ is the vertical parameter. 
Let $c = (0, 0)$ be the common point of the sections in the central fiber $E$ defined by $t = 0$. Without loss of generality, around $c$ we can parameterize the tangent directions of $P_i$ by $z = 0$, $z = t z_2$ and $z= tz_3$ for $i=1,2,3$, respectively, and $z = t(z_2 + z_3)$ for $Q_3$, where 
$z_2, z_3$ are fixed in $E \cong \bbC/\bbZ^2$. Blow up $c$ and for the resulting surface, use $(t, z, [u, v])$ as the new coordinates such that $t u = z v$. Then the exceptional curve $R$ is defined by $t = z = 0$ and the proper transform 
of $E$, still denoted by $E$, is parameterized by $(0, z, [1, 0])$. In particular, $R$ and $E$ meet at $r = (0,0,[1,0])$. The proper transforms 
of the four sections meet $R$ at $p_1 = [0,1]$, $p_2 = [z_2, 1]$, $p_3 = [z_3, 1]$ and $q_3 = [z_2 + z_3, 1]$. Let $x = u/v$ be the affine coordinate 
of $R \backslash s$, where $s$ corresponds to $x = \infty$. Then there exists a unique double cover $\pi: R\to \bbP^1$ by $x\mapsto (x-z_2)(x-z_3)$ (modulo isomorphisms of $\bbP^1$) such that $\pi(p_2) = \pi(p_3)$, $\pi(p_1) = \pi(q_3)$ and $\pi$ is ramified at $r$. In other words, 
using the pencil $|2q|$ on $E$ and $\pi$ on $R$, one can construct an admissible double cover $E\cup_r R\to \bbP^1 \cup \bbP^1$ such that up to isomorphism $q_3$ in the rational tail $R$ is uniquely determined by $p_1, p_2$ and $p_3$. 

Finally, take a general point $(E; p_1, p_2, p_3)$ in $D_{(2,-1,-1)}$, i.e. $2p_1 - p_2 - p_3 = 0$. Then we conclude that 
$$q_3 = p_2 + p_3 - p_1 = p_1 = q_1.$$ 
Blow up the point where $q_1$ and $q_3$ meet. We end up with a general point in $\delta_{0; \{1,3\}}$, since three special points in $\bbP^1$ have unique moduli. 
\end{proof}

By the same token, one can prove the following for $f^{-1}$. 

\begin{proposition}
\label{prop:autoinverse}
Away from $D_{(-1, 2,-1)}$ and the boundary of $\BM_{1,3}$, $f^{-1}$ is injective with image contained in $\MM_{1,3}$. 
For general points in each boundary component of $\BM_{1,3}$, $f^{-1}$ induces the following action: 
$$\delta_{\irr} \mapsto \delta_{\irr}, $$
$$  \delta_{0; \{1,2 \}} \mapsto  \delta_{0; \{1,2\}}, $$ 
$$  \delta_{0; \{1,3 \}} \mapsto  D_{( 2, -1, -1)}, $$
$$  \delta_{0; \{2,3 \}} \mapsto  \delta_{0; \{1,3\}}, $$ 
$$   \delta_{0; \{1,2,3 \}} \mapsto  \delta_{0; \{1,2, 3\}}. $$
For general points in $D_{(-1, 2,-1)}$, the action induced by $f$ is:    
$$ D_{(-1,2,-1)} \mapsto \delta_{0; \{2,3 \}}. $$ 
\end{proposition}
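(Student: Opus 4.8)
The plan is to deduce Proposition~\ref{prop:autoinverse} directly from Proposition~\ref{prop:auto}, using the fact that $f^{-1}$ is literally the inverse birational map of $f$. First I would record the algebraic identity that justifies the name: since $q_1 = p_1$ and $q_2 = p_2$, substituting $q_3 = p_2 + p_3 - p_1$ into the prescription $p_3 = q_1 + q_3 - q_2$ returns $p_3$, and symmetrically the other composite is the identity, so $f^{-1}\circ f = f\circ f^{-1} = \operatorname{id}$ as rational self-maps of $\MM_{1,3}$. The cleanest bookkeeping device is the conjugation relation $f^{-1} = \tau\circ f\circ\tau$, where $\tau$ is the automorphism of $\BM_{1,3}$ interchanging the first two marked points: tracing $(x_1,x_2,x_3)$ through $\tau$, then $f$, then $\tau$ yields $(x_1,x_2,x_1+x_3-x_2)$, which is exactly $f^{-1}$. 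Because $\tau$ is a genuine everywhere-regular involution, this identity holds as birational self-maps and hence at the level of the codimension-one actions on divisors.

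Granting this, every line of the statement follows from Proposition~\ref{prop:auto} combined with the elementary $\mathfrak{S}_3$-action on divisor classes: $\tau$ fixes $\delta_{\irr}$, $\delta_{0;\{1,2\}}$ and $\delta_{0;\{1,2,3\}}$, interchanges $\delta_{0;\{1,3\}}$ and $\delta_{0;\{2,3\}}$, and carries $D_{(2,-1,-1)}$ to $D_{(-1,2,-1)}$. For example, $f^{-1}(\delta_{0;\{1,3\}}) = \tau f\tau(\delta_{0;\{1,3\}}) = \tau f(\delta_{0;\{2,3\}}) = \tau(D_{(-1,2,-1)}) = D_{(2,-1,-1)}$, and $f^{-1}(D_{(-1,2,-1)}) = \tau f\tau(D_{(-1,2,-1)}) = \tau f(D_{(2,-1,-1)}) = \tau(\delta_{0;\{1,3\}}) = \delta_{0;\{2,3\}}$; the remaining four images are read off identically. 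The injectivity claim transports the same way: away from $D_{(2,-1,-1)}$ and the boundary $f$ is injective into $\MM_{1,3}$, and $\tau$ carries $D_{(2,-1,-1)}$ to $D_{(-1,2,-1)}$, so $f^{-1}$ is injective into $\MM_{1,3}$ away from $D_{(-1,2,-1)}$ and the boundary. One can also see the exceptional locus directly, since $p_3 = q_1 + q_3 - q_2$ collides with another marked point exactly when $q_1 - 2q_2 + q_3 = 0$, i.e. on $D_{(-1,2,-1)}$.

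Should one prefer a self-contained argument, the admissible-cover analysis of Proposition~\ref{prop:auto} can be repeated verbatim; only two cases carry genuine content. For $\delta_{0;\{1,3\}}\mapsto D_{(2,-1,-1)}$, on a general point $E\cup_t R$ with $q_1,q_3$ on the rational tail $R$ one blows down $R$ so that $q_1,q_3$ stabilize to $t$; then $p_3 = q_1 + q_3 - q_2 = 2t - q_2 = 2p_1 - p_2$, placing the image in $D_{(2,-1,-1)}$. For $D_{(-1,2,-1)}\mapsto \delta_{0;\{2,3\}}$, a general point satisfies $q_1 + q_3 = 2q_2$, hence $p_3 = q_2 = p_2$; after blowing up the collision of the second and third points, the three special points on the resulting $\bbP^1$ have unique moduli, so the image is a general point of $\delta_{0;\{2,3\}}$. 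The three fixed cases and the $\delta_{0;\{1,2,3\}}$ case proceed exactly as their counterparts in Proposition~\ref{prop:auto}.

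The hard part, such as it is, lies only in the inverse-image bookkeeping underlying the first approach: to conclude that $f^{-1}$ carries $B$ to $A$ whenever Proposition~\ref{prop:auto} carries $A$ to $B$, one needs that $f$ contracts none of these divisors. This is precisely what the explicit moving images constructed in Proposition~\ref{prop:auto} supply, for then $f$, being birational and hence generically injective, restricts to a birational isomorphism $A\dashrightarrow f(A)$ onto the corresponding divisor, and the restriction of $f^{-1}$ is its inverse. No input beyond Proposition~\ref{prop:auto} and the $\mathfrak{S}_3$-symmetry is required.
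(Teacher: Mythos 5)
Your argument is correct, and its primary route is genuinely different from the paper's. The paper disposes of this proposition with ``by the same token,'' i.e.\ it intends the reader to rerun the case-by-case admissible-cover analysis of Proposition~\ref{prop:auto} for $f^{-1}$ --- which is essentially your third paragraph, and your two computations there ($p_3 = 2t - q_2 = 2p_1 - p_2$ on $\delta_{0;\{1,3\}}$, and $p_3 = q_2$ on $D_{(-1,2,-1)}$) are the right ones. Your main route instead observes the conjugation identity $f^{-1} = \tau\circ f\circ\tau$ for the transposition $\tau$ of the first two marked points; I checked that $\tau f\tau(x_1,x_2,x_3) = (x_1,x_2,x_1+x_3-x_2)$ does agree with $f^{-1}$, that $\tau$ is a regular involution of $\BM_{1,3}$ fixing $\delta_{\irr}$, $\delta_{0;\{1,2\}}$, $\delta_{0;\{1,2,3\}}$, swapping $\delta_{0;\{1,3\}}$ with $\delta_{0;\{2,3\}}$, and carrying $D_{(2,-1,-1)}$ to $D_{(-1,2,-1)}$, and that all six divisor images and the injectivity locus then drop out of Proposition~\ref{prop:auto} exactly as you say. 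This buys a proof that is purely formal given Proposition~\ref{prop:auto} --- no new degeneration or admissible-cover argument is needed --- at the small cost of the observation that $f$ happens to be conjugate to its own inverse under $\mathfrak{S}_3$, a coincidence particular to this $f$ that the paper's more pedestrian route does not rely on. One minor remark: your closing worry about $f$ contracting divisors is moot for the conjugation route, since there you only ever push divisors forward under $f$ and $\tau$, never pull them back under $f$; it matters only for the alternative ``invert the table of Proposition~\ref{prop:auto}'' bookkeeping, and you correctly note that the explicit divisorial images already rule out contraction there.
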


\begin{corollary}
\label{cor:autoeff}
The maps $f$ and $f^{-1}$ induce isomorphisms in codimension-one. In particular, they preserve the structure of $\BEff(\BM_{1,3})$. As a consequence 
$D_{(2,-1,-1)}$ is an extremal effective divisor. 
\end{corollary}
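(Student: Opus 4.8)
The plan is to derive the corollary from Propositions~\ref{prop:auto} and \ref{prop:autoinverse} in three steps: first establish that $f$ is an isomorphism in codimension one, then observe that such a map induces a linear automorphism of the real N\'eron--Severi space $N^1(\BM_{1,3})_{\bbR}$ carrying $\BEff(\BM_{1,3})$ onto itself, and finally read off the extremality of $D_{(2,-1,-1)}$ from the fact that $f$ sends it to a boundary divisor.

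For the first step, I would argue that neither $f$ nor $f^{-1}$ contracts a divisor. By Proposition~\ref{prop:auto} the map $f$ is a morphism on $\MM_{1,3}\setminus D_{(2,-1,-1)}$, and it was already arranged to extend to a morphism in codimension one across the boundary, so its indeterminacy locus has codimension at least two; the same holds for $f^{-1}$ by Proposition~\ref{prop:autoinverse}. The only prime divisors of $\BM_{1,3}$ not contained in the open set where $f$ is already an isomorphism are the four boundary divisors together with $D_{(2,-1,-1)}$, and Proposition~\ref{prop:auto} shows that each of these is carried dominantly onto another prime divisor rather than onto a lower-dimensional locus; Proposition~\ref{prop:autoinverse} does the symmetric bookkeeping for $f^{-1}$. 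Passing to a common resolution $p\colon W\to\BM_{1,3}$ and $q\colon W\to\BM_{1,3}$ with $f=q\circ p^{-1}$, the statement that no divisor is contracted becomes the assertion that every $p$-exceptional divisor is $q$-exceptional and conversely, which is precisely what it means for $f$ to be an isomorphism in codimension one. I expect this step to be the main obstacle: one must be certain that the inventory of prime divisors in Propositions~\ref{prop:auto} and \ref{prop:autoinverse} is complete, so that no stray divisor is secretly contracted, and that the extensions across the boundary really are defined away from codimension two.

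For the second step, once $f$ is an isomorphism in codimension one between the $\bbQ$-factorial projective variety $\BM_{1,3}$ and itself, the strict transform of an effective divisor is again effective, and pushforward along $f$ and along $f^{-1}$ are mutually inverse linear maps on $N^1(\BM_{1,3})_{\bbR}$. Hence $f_{*}$ preserves $\Eff(\BM_{1,3})$, and therefore its closure $\BEff(\BM_{1,3})$, as a cone; in particular $f_{*}$ and its inverse carry extremal rays to extremal rays. For the final step, Proposition~\ref{prop:auto} gives $f_{*}D_{(2,-1,-1)}=\delta_{0;\{1,3\}}$. Since the boundary divisor $\delta_{0;\{1,3\}}$ spans an extremal ray of $\BEff(\BM_{1,3})$, applying $f^{-1}_{*}$ shows that $D_{(2,-1,-1)}$ spans an extremal ray as well, which is the desired conclusion.
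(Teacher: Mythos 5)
Your argument is correct and follows the paper's proof in essence: both deduce from Propositions~\ref{prop:auto} and \ref{prop:autoinverse} that $f$ is an isomorphism in codimension one, hence $f_{*}$ preserves $\BEff(\BM_{1,3})$, and both conclude extremality of $D_{(2,-1,-1)}$ from $f_{*}D_{(2,-1,-1)}=\delta_{0;\{1,3\}}$ together with the extremality of the boundary divisor. The paper simply states the first part as obvious, whereas you spell out the bookkeeping of non-contracted divisors and the linear-algebra of pushforward; the content is the same.
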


\begin{proof}
The statement about $f$ and $f^{-1}$ is obvious by Propositions~\ref{prop:auto} and \ref{prop:autoinverse}. Since $f_{*}D_{(2,-1,-1)} = \delta_{0; \{ 1,3\}}$ is extremal and rigid, we thus conclude the extremality and rigidity for $D_{(2,-1,-1)}$. 
\end{proof}

\begin{remark}
\label{rem:undefined}
The map $f$ is not regular at the locus parameterizing two rational curves $X$ and $Y$ intersecting at two nodes $s$ and $t$, 
where $p_2, p_3$ are contained in $X$ and $p_1$ is contained in $Y$. Using admissible covers, the point $q_3$ in $Y$ satisfies $p_1 + q_3 \sim s +t$, but any point in $Y$ (away from $s$ and $t$) can be such $q_3$ because $Y$ is rational. The resulting covering curve still keep $q_1 = p_1$ and $q_3$ in $Y$, but along with $s, t$ the four special points in $Y$ have varying moduli. Therefore, its image under $f$ cannot be uniquely determined. 
\end{remark}

Using $f$ and the action of $\mathfrak{S}_3$ permuting the marked points, the signature $(a_1, a_2, a_3)$ can be sent to 
$$(a'_1, a'_2, a'_3) = (a_1 - a_3, a_2 + a_3, a_3). $$ 
Given $a_1 + a_2 + a_3 = 0$ and 
$\gcd(a_1, a_2, a_3) = 1$, without loss of generality we can assume that $a_1 > a_3 > 0$ (unless $a_1 = a_3 = 1$)
and $a_2 < 0$. Then $-a_2 = a_1 + a_3$ and $|a_3| < |a_1| < |a_2|$. The new signature satisfies 
$ |a'_i| < |a_2| $ for $1\leq i\leq 3$. Keep using such actions and eventually we can reduce the signature to ${\bf a} = (1, 1, -2)$. By Corollary~\ref{cor:autoeff}
we thus obtain another proof for Theorem~\ref{thm:main} in the case of $n=3$. 

\subsection{Geometry of $\BM_{1,n}$ for $n\geq 4$}
\label{sec:n>3}

In this section suppose $n\geq 4$. First, let us consider pulling back divisors from $\BM_{1,3}$. 

Let $\pi: \BM_{1,n}\to \BM_{1,3}$ be the forgetful map forgetting $p_4, \ldots, p_n$. Assume that $\gcd(a_1, a_2) = 1$.  
In Section~\ref{sec:n=3} we have shown that $D_{(a_1, a_2, -a_1-a_2)}$ is extremal. Now fix a smooth genus one curve $E$ with fixed $p_3, p_4, \ldots, p_n$ in general position. Varying $p_1, p_2$ in $E$ such that $\sum_{i=1}^3 a_i p_i = 0$, we obtain a curve $X$ moving inside $\pi^{*}D_{(a_1, a_2, -a_1 - a_2)}$. 
We have also seen that $(\pi_{*}X)\cdot D_{(a_1, a_2, -a_1 - a_2)} < 0$ on $\BM_{1,3}$, hence by the projection formula, we have 
$X\cdot (\pi^{*}D_{(a_1, a_2, -a_1 - a_2)}) < 0$. Since $\pi^{*}D_{(a_1, a_2, -a_1 - a_2)}$ is irreducible, we conclude the following. 

\begin{proposition}
Let ${\bf a} = (a_1, a_2, -a_1 - a_2, 0, \ldots, 0)$ for $\gcd (a_1, a_2) = 1$. Then the divisor class $D_{\bf a}$ 
 is extremal in $\BEff(\BM_{1,n})$. 
\end{proposition}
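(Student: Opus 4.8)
The plan is to prove that the pulled-back divisor $\pi^{*}D_{(a_1, a_2, -a_1-a_2)}$ is extremal by exhibiting a moving curve with negative self-intersection against it, exactly as in Lemma~\ref{lem:negative}. The strategy leverages everything already established for $n=3$: we know $D_{(a_1,a_2,-a_1-a_2)}$ is irreducible (since $\gcd(a_1,a_2)=1$ forces $\gcd(a_1,a_2,-a_1-a_2)=1$) and extremal on $\BM_{1,3}$, and that on $\BM_{1,3}$ there is a moving curve $Y:=\pi_{*}X$ with $Y\cdot D_{(a_1,a_2,-a_1-a_2)}<0$. The entire argument is then a transfer of this negativity up to $\BM_{1,n}$ via the forgetful map $\pi$.

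First I would fix a general smooth genus one curve $E$ together with general marked points $p_3, p_4, \ldots, p_n$, and let $p_1, p_2$ vary subject to the single condition $a_1 p_1 + a_2 p_2 + (-a_1-a_2)p_3 = 0$ in $\Jac(E)$. This traces out a curve $X$ in $\BM_{1,n}$. Because only $p_1, p_2$ move and they satisfy precisely the defining relation of $D_{(a_1,a_2,-a_1-a_2)}$ (the coordinates $p_4,\ldots,p_n$ being irrelevant to that relation), the image $\pi(X)$ is the corresponding moving curve on $\BM_{1,3}$, and $X$ sweeps out a dense subset of $\pi^{*}D_{(a_1,a_2,-a_1-a_2)}$ as $E$ and the fixed points vary; hence $X$ is moving in $\pi^{*}D_{(a_1,a_2,-a_1-a_2)}$. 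Second, the projection formula gives
$$X\cdot \pi^{*}D_{(a_1,a_2,-a_1-a_2)} \;=\; (\pi_{*}X)\cdot D_{(a_1,a_2,-a_1-a_2)} \;=\; Y\cdot D_{(a_1,a_2,-a_1-a_2)} \;<\; 0,$$
using the computation from Section~\ref{sec:n=3}. Third, I would verify that $\pi^{*}D_{(a_1,a_2,-a_1-a_2)}$ is irreducible: since the generic fiber of $\pi$ over a point of $D_{(a_1,a_2,-a_1-a_2)}$ is a product of curves (choices of the forgotten points) and hence irreducible, pullback of an irreducible divisor stays irreducible, as already noted in Section~\ref{sec:components}. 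With $X$ moving, $\pi^{*}D_{(a_1,a_2,-a_1-a_2)}$ irreducible, and the negative intersection in hand, Lemma~\ref{lem:negative} immediately yields extremality (and rigidity).

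The main subtlety I anticipate is not the intersection computation, which is purely formal via the projection formula, but rather confirming that $X$ is genuinely a \emph{moving} curve in $\pi^{*}D_{(a_1,a_2,-a_1-a_2)}$ — that its deformations, obtained by varying the complex structure of $E$ and the positions of the forgotten points $p_3,\ldots,p_n$, really cover a Zariski-dense subset of the pullback divisor rather than some proper subvariety. This requires checking that the locus swept out has the full dimension $n=\dim \pi^{*}D_{(a_1,a_2,-a_1-a_2)}$: one modulus from $E$, $(n-2)$ from the positions $p_3,\ldots,p_n$, and one from the one-parameter motion of $(p_1,p_2)$ along the defining relation, which totals the expected dimension. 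Once dimension-counting confirms density, the rest follows mechanically, and the identification $\pi_{*}X = Y$ (degree one, since $X\to Y$ is generically injective as only $p_1,p_2$ move in $X$) ensures the projection formula delivers the sign we need.
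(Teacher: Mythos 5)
Your proposal follows exactly the paper's argument: fix $E$ and $p_3,\ldots,p_n$ general, vary $p_1,p_2$ subject to $\sum_{i=1}^3 a_ip_i=0$ to get a moving curve $X$ in the irreducible divisor $D_{\bf a}=\pi^{*}D_{(a_1,a_2,-a_1-a_2)}$, transfer the negativity $(\pi_{*}X)\cdot D_{(a_1,a_2,-a_1-a_2)}<0$ via the projection formula, and conclude with Lemma~\ref{lem:negative}. The only quibble is your dimension count, which should yield $\dim D_{\bf a}=n-1$ (you must quotient by the one-dimensional translation group of $E$); this is inconsequential, since a general point of $D_{\bf a}$ visibly lies on such a curve $X$, so $X$ is moving.
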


\begin{corollary}
For $n\geq 4$, the cone $\BEff(\BM_{1,n})$ is not finite polyhedral. 
\end{corollary}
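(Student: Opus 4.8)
The plan is to deduce this corollary directly from the preceding Proposition, which already supplies the substantive content. That result shows that for every pair $(a_1, a_2)$ with $\gcd(a_1, a_2) = 1$ the class $D_{(a_1, a_2, -a_1-a_2, 0, \ldots, 0)}$ spans an extremal ray of $\BEff(\BM_{1,n})$. Since a finite polyhedral cone has only finitely many extremal rays, it suffices to exhibit infinitely many of these classes that are pairwise non-proportional, i.e.\ that span distinct rays.

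To carry this out I would fix the explicit one-parameter family ${\bf a}_m = (m, 1, -m-1, 0, \ldots, 0)$ for positive integers $m$. Each satisfies $\gcd(m, 1) = 1$, so the Proposition applies and $D_{{\bf a}_m}$ is extremal. Using Proposition~\ref{prop:totalclass} I would read off the relevant coefficients of $D_{{\bf a}_m}$ in the generators $\lambda$ and the $\delta_{0;S}$. A short computation, using $\sum_i a_i^2 = 2(m^2 + m + 1)$, gives the coefficient of $\lambda + \delta_{0;\{1,\ldots,n\}}$ equal to $m^2 + m$ and the coefficient of $\delta_{0;\{1,2\}}$ equal to $-a_1 a_2 = -m$; note $\delta_{0;\{1,2\}}$ genuinely occurs since $2 \leq |\{1,2\}| < n$ for $n \geq 4$. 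Hence the ratio of these two coefficients equals $-1/(m+1)$.

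Because this ratio takes a distinct value for each $m$, the classes $D_{{\bf a}_m}$ are pairwise non-proportional and therefore span infinitely many distinct extremal rays of $\BEff(\BM_{1,n})$. A finite polyhedral cone cannot contain infinitely many extremal rays, so $\BEff(\BM_{1,n})$ is not finite polyhedral.

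The extremality of each individual divisor is the genuinely hard input, and it is already established in the Proposition, so the only remaining task is to confirm that the rays are distinct. The main (and minor) obstacle is thus purely arithmetic: arranging a family of coprime pairs along which a single ratio of coefficients is manifestly non-constant. The choice $(m, 1, -m-1)$ achieves this cleanly, but any infinite family of coprime $(a_1, a_2)$ with, say, bounded $a_2$ and unbounded $a_1$ would serve equally well, so no delicacy is involved.
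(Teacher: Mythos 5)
Your proof is correct and follows essentially the same route as the paper: both deduce the corollary from the extremality Proposition and then distinguish the rays by computing the divisor classes (the paper writes out the full pullback $\pi^{*}D_{(a_1,a_2,-a_1-a_2)}$ and asserts that varying $a_1,a_2$ gives infinitely many rays, while you make the non-proportionality explicit via the ratio $-1/(m+1)$ for the family $(m,1,-m-1,0,\ldots,0)$). Your version is, if anything, slightly more complete, since the paper leaves the pairwise non-proportionality as an unstated verification.
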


\begin{proof}
We have 
$$ \pi^{*}\lambda = \lambda, $$
$$ \pi^{*} \delta_{0; \{1,2\}} = \sum_{\{1,2\} \subset S \atop 3\not\in S} \delta_{0;S}, $$
$$ \pi^{*} \delta_{0; \{1,2, 3\}} = \sum_{\{1,2,3\}\subset S} \delta_{0; S}. $$
Then for $\gcd (a_1, a_2) = 1$, we obtain that 
$$ \pi^{*} D_{(a_1, a_2, -a_1 - a_2)} = (-1 + a_1^2 + a_2^2 + a_1 a_2) \Big(\lambda + \sum_{\{1,2,3\}\subset S} \delta_{0; S}\Big) $$
$$ - a_1 a_2 \Big(\sum_{\{1,2\} \subset S \atop 3\not\in S}\delta_{0;S}\Big) + a_1 (a_1 + a_2) \Big(\sum_{\{1,3\} \subset S \atop 2\not\in S}\delta_{0;S}\Big) + a_2 (a_1+a_2) \Big(\sum_{\{2,3\} \subset S \atop 1\not\in S}\delta_{0;S}\Big). $$
 By varying $a_1, a_2$, we obtain infinitely many extremal rays.  
\end{proof}

Next we consider $D_{\bf a}$ when at least four entries are non-zero and $\gcd(a_1,\ldots, a_n)=1$.  
Let $D_{\bf a}(E, \eta)$ be the closure of the locus parameterizing 
$(E; p_1, \ldots, p_n)$ such that $\sum_{i=1}^n a_i p_i = \eta$ for fixed $\eta\in \Jac(E)$ on a fixed genus one curve $E$. 

For $S = \{i_1, \ldots, i_k\}$, consider the locus $\delta_{0;S}(E)$ of 
curves parameterized in $\delta_{0;S}$ whose genus one component is $E$. Blow down the rational tails and $p_{i_1}, \ldots, p_{i_k}$ reduce to the same point $q$ in $E$. 
For $\eta \neq 0$, the condition 
$$\Big(\sum_{j=1}^k a_{i_j}\Big)q + \sum_{j\not\in S} a_j p_j = \eta$$ 
does not hold for $q$ and $p_j$ in general position in $E$. Therefore, $\delta_{0;S}(E)$ is not contained in $D_{\bf a}(E, \eta)$ 
for $\eta\neq 0$, and $D_{\bf a}(E, \eta)$ is irreducible of codimension-two in $\BM_{1,n}$. 

If $\eta = 0$, the above argument still goes through with only one exception when $S = \{ 1, \ldots, n \}$, because the condition $\sum_{i=1}^n a_i p_i = 0$ automatically holds if all the marked points coincide due to the assumption $\sum_{i=1}^n a_i = 0$. In other words, $D_{\bf a}(E, 0)$
consists of two components. One is $D_{\bf a}(E)$ whose general points parameterize $n$ distinct points $p_1, \ldots, p_n$ in $E$ such that $\sum_{i=1}^n a_i p_i = 0$ and the other is $\delta_{0; \{1,\ldots,n\}}(E)$ whose general points parameterize $E$ attached to a rational tail that contains all the marked points. 

Now let us prove Theorem~\ref{thm:main} for the case $n\geq 4$. 

\begin{proof}
Note that for $\eta\neq 0$, $D_{\bf a}(E, \eta)$ is disjoint from $D_{\bf a}$. This is clear in the interior of $\BM_{1,n}$. At the boundary, if $k$ marked points coincide, say $p_1 = \cdots =  p_k = q$ in $E$, then 
$$\Big(\sum_{i=1}^k a_i\Big) q + \sum_{j=k+1}^n a_j p_j$$ 
has to be $\eta$ for $D_{\bf a}(E, \eta)$ and $0$ for $D_{\bf a}$, which cannot hold simultaneously for $\eta \neq 0$. 

Since $n\geq 4$, take $n-3$ very ample divisors on $\BM_{1,n}$ and consider their intersection restricted to $D_{\bf a}(E, \eta)$, which gives rise to an irreducible curve $C_{{\bf a}}(E, \eta)$ moving in $D_{\bf a}(E, \eta)$. Restricting to $D_{\bf a}(E, 0)$, we see that $C_{{\bf a}}(E, \eta)$ specializes to $C_{{\bf a}}(E, 0)$ which consists of two components $C_{{\bf a}}(E)$ and $C_{0;\{1,\ldots, n\}}(E)$, 
contained in $D_{\bf a}(E)$ and $\delta_{0; \{1,\ldots,n\}}(E)$, respectively. Moreover, $C_{{\bf a}}(E, 0)$ is connected, hence 
$C_{{\bf a}}(E)$ and $C_{0;\{1,\ldots, n\}}(E)$ intersect each other. Therefore, we conclude that
$$ (C_{{\bf a}}(E) + C_{0;\{1,\ldots, n\}}(E)) \cdot D_{\bf a} = C_{{\bf a}}(E, \eta)\cdot D_{\bf a} = 0, $$
$$ C_{0;\{1,\ldots, n\}}(E) \cdot D_{\bf a} > 0, $$
$$ C_{{\bf a}}(E) \cdot D_{\bf a} < 0. $$

The curve $C_{{\bf a}}(E)$ is not only moving in $D_{\bf a}(E)$ but also varies with the complex structure of $E$, hence it is moving in $D_{\bf a}$. Since it has negative intersection with $D_{\bf a}$ and $D_{\bf a}$ is irreducible, by Lemma~\ref{lem:negative} we thus conclude that $D_{\bf a}$ is extremal and rigid. 
\end{proof}

\begin{corollary}
\label{cor:moridream}
For $n\geq 3$ the moduli space $\BM_{1,n}$ is not a Mori dream space. 
\end{corollary}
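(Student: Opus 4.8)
The plan is to deduce Corollary~\ref{cor:moridream} directly from Theorem~\ref{thm:main} via the general theory of Mori dream spaces. First I would recall the characterization due to Hu and Keel: if a normal projective $\bbQ$-factorial variety $X$ is a Mori dream space, then its pseudo-effective cone $\BEff(X)$ is rational polyhedral, and in particular it has only finitely many extremal rays. Since $\BM_{1,n}$ has at worst finite quotient singularities (as the coarse space of a smooth Deligne--Mumford stack), it is normal and $\bbQ$-factorial, and its rational Picard group coincides with that of the stack, so $\BEff(\BM_{1,n})$ is the relevant cone. Thus it suffices to exhibit infinitely many \emph{distinct} extremal rays in $\BEff(\BM_{1,n})$.

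Next I would invoke the divisors $D_{\bf a}$. By Theorem~\ref{thm:main}, for every ${\bf a}$ with $\gcd(a_1,\ldots,a_n)=1$ the irreducible class $D_{\bf a}$ spans an extremal ray, so the only remaining point is to isolate an infinite subfamily whose classes are pairwise non-proportional. For $n=3$ the one-parameter family ${\bf a}=(m+1,-m,-1)$ already produced in the proof of Theorem~\ref{thm:main} does exactly this: the associated rays computed from Proposition~\ref{prop:totalclass} are visibly distinct as $m$ varies. For $n\geq 4$ one can instead take the classes $D_{(a_1,a_2,-a_1-a_2,0,\ldots,0)}$ with $\gcd(a_1,a_2)=1$, whose explicit expansion via the pullback formula shows they sweep out infinitely many distinct rays as $(a_1,a_2)$ ranges over coprime pairs.

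Finally, comparing the two facts yields the conclusion: a rational polyhedral cone cannot contain infinitely many extremal rays, so $\BEff(\BM_{1,n})$ is not finite polyhedral, and hence $\BM_{1,n}$ is not a Mori dream space. In this scheme essentially everything is a formal application of the Hu--Keel theory; the one substantive input is the pairwise non-proportionality of the chosen infinite family, which I expect to be the main (though mild) obstacle and which is settled by reading off the distinctness of the rays from the divisor-class formula of Proposition~\ref{prop:totalclass}.
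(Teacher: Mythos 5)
Your proposal is correct and follows essentially the same route as the paper: the paper also deduces the corollary from the Hu--Keel characterization (citing \cite[1.11 (2)]{HuKeel}, that a Mori dream space has effective cone spanned by finitely many effective divisors), combined with the infinitude of extremal rays already exhibited via the families $(m+1,-m,-1)$ for $n=3$ and $(a_1,a_2,-a_1-a_2,0,\ldots,0)$ for $n\geq 4$. Your additional remarks on $\bbQ$-factoriality and on verifying pairwise non-proportionality of the rays are sensible elaborations of what the paper leaves implicit, but they do not change the argument.
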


\begin{proof}
By \cite[1.11 (2)]{HuKeel}, if $\BM_{1,n}$ is a Mori dream space, its effective cone would be the affine hull spanned by finitely many effective divisors, which contradicts the fact that $\BEff(\BM_{1,n})$ has infinitely many extremal rays. 
\end{proof}

\section{Effective divisors on $\TMM_{1,n}$} 
\label{sec:unordered}

In this section, we study the moduli space $\TMM_{1,n}$ of stable genus one curves with $n$ unordered marked points. 
The symmetric group $\mathfrak{S}_n$ acts by permuting the labeling of the points on $\BM_{1,n}$. We denote  the quotient $\BM_{1,n} / \mathfrak{S}_n $ by $\TMM_{1,n}$. 
The rational Picard group of $\TMM_{1,n}$ is generated by $\widetilde{\delta}_{\irr}$ and $\widetilde{\delta}_{0; k}$ for $2\leq k \leq n$, where $\widetilde{\delta}_{\irr}$ is the image of $\delta_{\irr}$ and $\widetilde\delta_{0; k}$ is the image of the union of $\delta_{0;S}$ for all $|S| = k$. 

In the case of genus zero, Keel and M$^{\rm c}$Kernan \cite{KeelMcKernanContractible} showed that the effective cone of $\TMM_{0,n}$ is spanned by the boundary divisors. Here we establish a similar result for $\TMM_{1,n}$. 

\begin{theorem}
\label{thm:unordered}
The effective cone of $\TMM_{1,n}$ is the cone spanned by the boundary divisors $\widetilde{\delta}_{\irr}$ and $\widetilde{\delta}_{0; k}$ for $2\leq k \leq n$.
\end{theorem}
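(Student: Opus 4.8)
The plan is to show that the effective cone is the simplicial cone on the boundary basis. First I would record that the $n$ classes $\widetilde{\delta}_{\irr}$ and $\widetilde{\delta}_{0;k}$, $2\le k\le n$, actually form a basis of $\Pic(\TMM_{1,n})_{\bbQ}$: the $\mathfrak{S}_n$-invariant part of $\Pic(\BM_{1,n})_{\bbQ}$ is spanned by $\lambda$ and the symmetrized classes $\sum_{|S|=k}\delta_{0;S}$, and $\delta_{\irr}=12\lambda$. Since all these boundary classes are effective, it remains only to prove the reverse inclusion, namely that every effective (indeed pseudo-effective) class $D\equiv a_{\irr}\widetilde{\delta}_{\irr}+\sum_{k=2}^n a_k\widetilde{\delta}_{0;k}$ has $a_{\irr}\ge 0$ and $a_k\ge 0$ for all $k$. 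Writing $D$ as its boundary part plus a part $D'$ whose support contains no boundary divisor, the boundary part manifestly has nonnegative coordinates, so I may assume $D$ is effective with no boundary divisor in its support. The one principle I would use throughout is that a curve $C$ moving in a family that sweeps out a dense subset satisfies $C\cdot D\ge 0$ for every effective (hence pseudo-effective) $D$, because a general member of the covering family is not contained in $D$.

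\textbf{Two covering curves.} I would next isolate $a_{\irr}$ and $a_2$ by explicit covering curves. Fixing a general $(E;p_1,\dots,p_{n-1})$ and letting the last point vary over $E$ produces a covering curve $C$ whose only boundary intersections occur when $p_n$ meets one of the fixed points, so $C\cdot\widetilde{\delta}_{0;2}=n-1>0$ while $C\cdot\widetilde{\delta}_{\irr}=0$ and $C\cdot\widetilde{\delta}_{0;k}=0$ for $k\ge 3$; hence $C\cdot D=(n-1)a_2\ge 0$ gives $a_2\ge 0$. Dually, taking an elliptic surface over $\bbP^1$ with $n$ pairwise disjoint sections, nonconstant $j$-invariant, and only irreducible nodal singular fibers (e.g.\ a general pencil of plane cubics with base points as sections, suitably extended) yields a covering curve $C'$ meeting only $\widetilde{\delta}_{\irr}$, whence $a_{\irr}\ge 0$. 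These two curves are mutually dual and already settle the base case $n=2$ of the induction below.

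\textbf{Induction on $n$ via restriction to the boundary.} For $a_k$ with $3\le k\le n$ I would induct on $n$, using the gluing morphism $\gamma_k:\BM_{1,n-k+1}\times\BM_{0,k+1}\to\BM_{1,n}$ onto a component of $\delta_{0;S}$ with $|S|=k$. Since $D$ contains no boundary divisor, $\gamma_k^{*}D$ is effective on the product. By the inductive hypothesis the effective cone of the genus one factor (which carries fewer marked points) is boundary-spanned, and by the genus zero result of Keel and M\textsuperscript{c}Kernan the same holds for $\TMM_{0,k+1}$; by the Künneth decomposition of the effective cone of the product, $\gamma_k^{*}D$ is therefore a nonnegative combination of boundary classes, and every $\psi$- and $\lambda$-class appearing is itself a nonnegative boundary combination. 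On the other hand $\gamma_k^{*}\bigl(\sum_j a_j\widetilde{\delta}_{0;j}\bigr)$ contains, from the self-intersection of $\widetilde{\delta}_{0;k}$, the normal-bundle contribution $-a_k(\psi_{*}+\psi_{\bullet})$ at the node. Comparing the two expressions and using the negativity of this normal term, I expect the coefficient of the node $\psi$-class to force $a_k\ge 0$.

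\textbf{Main obstacle.} The delicate step is the last one: carrying out the restriction bookkeeping precisely, i.e.\ computing each $\gamma_k^{*}\widetilde{\delta}_{0;j}$ with its excess/self-intersection and $\psi$-corrections, and verifying that the resulting linear comparison genuinely isolates $a_k$ with the correct sign rather than merely constraining a combination of coordinates. A secondary technical point I would need to verify is the existence of the covering curve $C'$ with $n$ disjoint sections and varying $j$ for all $n$ (for large $n$ one may need base changes or auxiliary sections while keeping the fibers irreducible), together with the clean anchoring of the induction at $n=2$.
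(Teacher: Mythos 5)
Your setup, the reduction to divisors containing no boundary component, and the covering curve giving $a_2\ge 0$ all match the paper. But the heart of the theorem is the nonnegativity of $a_k$ for $3\le k\le n$, and there your proposal has a genuine gap. The boundary-restriction argument you sketch is not carried out, and I do not believe its sign analysis works as stated: the self-intersection term contributes $-a_k(\psi_{*}+\psi_{\bullet})$ to $\gamma_k^{*}D$, so if $a_k<0$ this term is a \emph{positive} multiple of $\psi$-classes, i.e.\ effective, and produces no contradiction with the effectivity of $\gamma_k^{*}D$. Since $\psi$-classes on $\TMM_{0,k+1}$ are nef and lie well inside the boundary cone (they are not extremal), knowing that $\gamma_k^{*}D$ is effective does not isolate the coefficient of the node $\psi$-class with a definite sign; one would need extremality or rigidity input, or test curves, at which point the restriction machinery is doing no work. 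In short, the step you yourself flag as the ``main obstacle'' is not a bookkeeping issue but a missing idea.

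The paper's proof replaces all of this with a single family of explicit test curves, inducting on $k$ rather than on $n$: let $C_j$ be the curve obtained from a fixed genus one curve $E$ with $n-j$ marked points and a fixed rational tail carrying $j$ marked points, attached at a point varying along $E$. This curve moves in $\widetilde{\delta}_{0;j}$ (so $C_j\cdot D\ge 0$ for $D$ irreducible and non-boundary) and satisfies $C_j\cdot\widetilde{\delta}_{0;j}=-(n-j)$, $C_j\cdot\widetilde{\delta}_{0;j+1}=n-j$, and zero against everything else, giving $b_{j+1}\ge b_j\ge\cdots\ge b_2\ge 0$ directly. For the coefficient of $\widetilde{\delta}_{\irr}$ the paper likewise avoids your elliptic-surface construction (which is delicate for $n>9$, since you need $n$ pairwise disjoint sections with only irreducible nodal fibers) by attaching a rational tail carrying all $n$ marked points to a base point of a general pencil of plane cubics; this curve moves in $\widetilde{\delta}_{0;n}$ and yields $12a\ge b_n\ge 0$. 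I would recommend abandoning the restriction-to-the-boundary route and adopting these one-parameter degenerations, which both close the gap at $k\ge 3$ and remove the existence question for your curve $C'$.
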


\begin{proof}
It suffices to show that any irreducible effective divisor is a nonnegative linear combination of boundary divisors. Suppose  $D$ is an effective divisor different from any boundary divisor and has class
$$ D = a \widetilde{\delta}_{\irr} + \sum_{k=2}^n b_k \widetilde{\delta}_{0; k}. $$ 
If $C$ is a curve class whose irreducible representatives form a Zariski dense subset of a boundary divisor $\widetilde{\delta}_{0;k}$, then $C \cdot D \geq 0$. Otherwise, the curves in the class $C$ and, consequently, the divisor $\widetilde{\delta}_{0;k}$ would be contained in $D$, contradicting the irreducibility of $D$. We first show that $b_k \geq 0$ by induction on $k$. Here the argument is exactly as in Keel and M$^{\rm c}$Kernan and does not depend on the genus $g$.

Let $C$ be the curve class in $\TMM_{1,n}$ induced by fixing a genus one curve $E$ with $n-1$ fixed marked points and letting an $n$-th point vary along $E$. Since the general $n$-pointed genus one curve occurs on a representative of $C$, $C$ is a moving curve class. We conclude that $C \cdot D \geq 0$ for any effective divisor. On the other hand, since $C \cdot \widetilde{\delta}_{0;2} = n-1$ and $C \cdot \widetilde{\delta}_{\irr} = C \cdot \widetilde{\delta}_{0;k} = 0$, for $2< k \leq n$, we conclude that $b_2 \geq 0$. 

By induction assume that $b_k \geq 0$ for $k \leq j$. We would like to show that $b_{j+1} \geq 0$. Let $E$ be a genus one curve with $n-j$ fixed points. Let $R$ be a rational curve with $j+1$ fixed points $p_1, \dots, p_{j+1}$. Let $C_j$ be the curve class in $\TMM_{1,n}$ induced by attaching $R$ at $p_{j+1}$ to a varying point on $E$. Since the general point on $\widetilde{\delta}_{0;j}$ is contained on a representative of the class $C_j$, we conclude that $C_j$ is a moving curve in $\widetilde{\delta}_{0;j}$. Hence, $C_j \cdot D \geq 0$. 
On the other hand, $C_j$ has the following intersection numbers with the boundary divisors:
$$ C_j \cdot \widetilde{\delta}_{\irr} = 0, $$
$$ C_j\cdot \widetilde{\delta}_{0; i} = 0 \ \mbox{for}\ i \neq j, j+1, $$
$$ C_j \cdot \widetilde{\delta}_{0; j+1} = n-j, $$
$$  C_j \cdot \widetilde{\delta}_{0; j} = -(n-j).$$
Hence, we conclude that 
$ b_{j+1} \geq  b_j \geq 0$ by induction. Note that by replacing $E$ by a curve $B$ of genus $g$, we would get the inequalities $b_2 \geq 0$ and $(n-j) b_{j+1} \geq (2g -2 + (n-j))b_j$ for the coefficients of $\widetilde{\delta}_{0;k}$ on any non-boundary, irreducible effective divisor on $\TMM_{g,n}$.

There remains to show that the coefficient $a$ is non-negative. Fix a general pencil of plane cubics and a rational curve $R$ with $n+1$ fixed marked points $p_1, \dots, p_{n+1}$. Let $C_n$ be the curve class in $\TMM_{1,n}$ induced by attaching $R$ at $p_{n+1}$ to a base-point of the pencil of cubics. The class $C_n$ is a moving curve class in the divisor $\widetilde{\delta}_{0;n}$. Consequently, $C_n \cdot D \geq 0$. Since $C_n \cdot \widetilde{\delta}_{\irr} = 12$, $C_n \cdot \widetilde{\delta}_{0;k} = 0$ for $k < n$ and $C_n \cdot \widetilde{\delta}_{0;n} = -1$, we conclude that $12 a \geq b_n \geq 0$. This concludes the proof that the effective cone of $\TMM_{1,n}$ is generated by boundary divisors.  
\end{proof}

\begin{appendix}

\section{Singularities of $\BM_{1,n}$}

Let $\bM_{1,n}$ be the underlying course moduli scheme of $\BM_{1,n}$. Denote by 
$\bM_{1,n}^{\reg}$ its smooth locus. Below we will show that a canonical form defined on $\bM_{1,n}^{\reg}$
extends holomorphically to any resolution of $\bM_{1, n}$. 

Since $\bM_{1,n}$ is rational when $n \leq 10$ \cite{BelorousskiThesis}, in this case there are no non-zero holomorphic forms on any resolution. We may, therefore, assume that $n \geq 11$ as needed. The standard reference on the singularities of $\bM_{g,n}$ dates back to \cite{HarrisMumfordKodaira} and some recent generalizations include \cite{LoganKodaira,  LudwigSpin, FarkasVerraJacobian, ChiodoFarkasLevel, BiniFontanariVivianiPicard}.  

Let $(C; \bx) = (C; x_1, \ldots, x_n)$ be a stable curve with $n$ ordered marked points.  Let $\phi$ be a non-trivial automorphism of $C$ such that $\phi(x_i) = x_i$ for all $i$, and suppose that the order of $\phi$ is $k$. 
If the eigenvalues of the induced action of $\phi$ on $H^0(C, \omega_C \otimes \Omega_C^1(x_1 + \cdots x_n))^{\vee}$ are $e^{2\pi i k_j/k}$ with $0 \leq k_j < k$, then the age of $\phi$ is defined as  
$$\age(\phi) = \sum_{j} \frac{k_j}{k}.$$ 

If $\phi$ acts trivially on a codimension-one 
subspace of the deformation space of $(C; \bx)$, we say that $\phi$ is a quasi-reflection. For a quasi-reflection, all but one of the eigenvalues of $\phi$ are equal to one and $\age(\phi) = 1/k$.  By the Reid-Tai Criterion, see e.g. \cite[p. 27]{HarrisMumfordKodaira}, if $\age(\phi)\geq 1$ for any $\phi \in \Aut(C;\bx)$, then a canonical form defined 
on the smooth locus of the moduli space extends holomorphically to any resolution. Moreover, suppose that $\Aut(C;\bx)$ does not contain any quasi-reflections, then the resulting singularity is canonical if and only if $\age(\phi)\geq 1$ for any $\phi \in \Aut(C,\bx)$, see e.g. \cite[Theorem 3.4]{LudwigSpin}.  The quasi-reflections  form a normal subgroup of $\Aut(C,\bx)$. One can consider the action modulo this subgroup and use the Reid-Tai Criterion, see
\cite[Proposition 3.5]{LudwigSpin}. In particular, no singularities arise if and only if $\Aut(C,\bx)$ is generated by 
quasi-reflections. 

The automorphism $\phi$ induces an action on 
$H^0(C, \omega_C\otimes \Omega_C^1(x_1+\cdots + x_n))^{\vee}$. We have an exact sequence: 
$$ 0 \to \bigoplus_{p\in C_{\sing}} \tor_p \to  H^0(C, \omega_C\otimes \Omega_C^1(x_1+\cdots + x_n)) \to 
\bigoplus_{\alpha} H^0(C_{\alpha}, \omega_{C_\alpha}^{\otimes 2}(\sum_{\beta} p_{\alpha\beta})) \to 0, $$
where $C_{\alpha}$'s are the components of the normalization of $C$ and $p_{\alpha\beta}$'s are the inverse images of nodes in $C_\alpha$.   

First, we show that for an irreducible elliptic curve $E$ with $n$ distinct marked points, 
we have $\age(\phi) \geq 1$. The automorphism group of $E$ has order 2 if $j(E) \not= 0, 1728$, has order $4$ if $j(E) = 1728$, and has order $6$ if $j(E) = 0$. Since $\phi$ fixes all $x_1, \ldots, x_n$, if $n\geq 3$, then $\phi$ has order $k = 2$ or  $3$. If $k = 2$, then $n= 3$ or  $4$, and hence by \cite[p. 37, Case c2)]{HarrisMumfordKodaira} 
we have $\age(\phi) = \frac{n-1}{2} \geq 1$. If $k = 3$, then $n = 3$, and hence \cite[p. 38, Case c3)]{HarrisMumfordKodaira} implies that $\age(\phi) \geq 1$.  
 
Next, consider a stable nodal genus one curve $(C; \bx)$ with $n$ ordered marked points. Let $C_0$ be its core curve of genus one. Then $C_0$ is either irreducible elliptic, or consists of a circle of $\bbP^1$s. It is easy to see that $\phi$ acts trivially on every component of $C\backslash C_0$. Let $C_0$ be a circle of $l$ copies of $\bbP^1$, i.e. $B_1,\ldots, B_l$ are glued successively at the nodes $p_1, \ldots, p_l$, where $B_i\cong \bbP^1$, $B_i\cap B_{i+1} = p_{i+1}$ and $p_{l+1} = p_1$. By the stability of $(C; \bx)$, each $B_i$ contains at least one more node or marked point, which has to be fixed by $\phi$. 
Therefore, $\phi$ acts non-trivially on $B_i$ only if it acts as an involution, switching $p_i$ and $p_{i+1}$ and fixing the other nodes and marked points on $B_i$. This implies that $l = 2$ and $k = 2$. By \cite[p. 34]{HarrisMumfordKodaira},  either $\age(\phi) \geq 1$ or $\Aut(C,\bx)$ is generated by this elliptic involution, which is a quasi-inflection and does not induce a  singularity. Thus, we are left with the case when $C_0$ is an irreducible elliptic curve $E$ and $\phi$ is induced by a non-trivial automorphism of $E$ fixing all marked points and acting trivially on the other components of $C$. 

If $E$ contains at least one marked point $x$, \cite[proof of Theorem 1.1 (ii)]{FarkasVerraJacobian} says that $\age(\phi) \geq 1$. We can also see this directly using \cite[p. 37-39, Case c)]{HarrisMumfordKodaira} as follows. If the order $n$ of $\phi$ 
is $2$, then the action restricted to $H^0(K_E^{\otimes 2}(x))$ contributes $1/2$ to $\age(\phi)$. At a node $p$ of $E$, suppose that the two branches have coordinates $y$ and $z$. Then $\tor_p$ is generated by 
$y dz^{\otimes 2}/z = z dy^{\otimes 2}/y$, see \cite[p. 33]{HarrisMumfordKodaira}. The action of $\phi$ locally is given by $y\to -y$ and $z\to z$, hence $\tor_p$ also contributes $1/2$. Consequently we get $\age(\phi) \geq 1$. 
If $k = 3$, at $p$ the action is locally given by $y\to \zeta y$ and $z\to z$, where $\zeta$ is a cube root of unity, hence $\tor_p$ contributes $1/3$. At $x$, take a translation invariant differential $dz$. Then locally $dz^{\otimes 2}$ is an eigenvector of $H^0(K_E^{\otimes 2}(x+p))$. The action $\phi$ is locally given by $x\to \zeta x$, hence it contributes $2/3$. We still get $\age(\phi)\geq 1/3 + 2/3 = 1$. If $k=4$, similarly $\tor_p$ contributes $1/4$. Locally take $dz^{\otimes 2}$ and $dz^{\otimes 2}/z$ as eigenvectors of 
 $H^0(K_E^{\otimes 2}(x+p))$. We get an additional contribution equal to $2/4 + 1/4$. In total we still have $\age(\phi)\geq 1$. Finally, since $\phi$ cannot fix both $x$ and $p$, the case $k=6$ does not occur. Similarly, if $E$ contains more than one node, $\phi$ fixes all the nodes, and hence the same analysis implies that $\age(\phi) \geq 1$. 

Based on the above analysis, we conclude that the locus of non-canonical singularities of $\bM_{1,n}$ is contained in the locus of curves $(C,\bx)$ where the core curve of $C$ is an unmarked irreducible elliptic tail $E$ attached to the rest of $C$ at a node $p$. Moreover, $G = \Aut(C,\bx) = \Aut(E,p)$ fixes all marked points and acts trivially on the other components of $C$. Harris and Mumford \cite[p. 40-42]{HarrisMumfordKodaira} proved that any canonical form defined in $\bM_{g,n}^{\reg}$ extends holomorphically to any resolution over the locus of curves of this type. Strictly speaking, Harris and Mumford discussed the case $\bM_{g}$. They constructed a suitable neighborhood of a point in $\bM_g$ parameterizing an elliptic curve attached to a curve $C_1$ of genus $g-1$ without any automorphisms. In their construction, the only property of $C_1$ they need is that  $C_1$ does not have any non-trivial automorphisms. Hence, their construction is applicable to the case when $C_1$ is replaced by an arithmetic genus zero curve with $n$ marked points for $n\geq 2$. Therefore, there is a neighborhood of $(C,\bx)$ in $\bM_{1,n}$ such that any canonical form defined in the smooth locus of this neighborhood extends holomorphically to a desingularization of the neighborhood. This thus completes the proof.  

\end{appendix}


\end{document}